\definecolor{forestgreen}{rgb}{0.13, 0.55, 0.13}
\definecolor{lightblue}{rgb}{0.68, 0.85, 0.9}
\def\N{{\mathbb N}}
\definecolor{mygray}{rgb}{0.9,0.9,0.9}
\begin{document}
\theoremstyle{plain}
\newtheorem{theorem}{Theorem}[section]
\newtheorem{lemma}{Lemma}[section]
\newtheorem{proposition}{Proposition}[section]
\newtheorem{corollary}{Corollary}[section]

\theoremstyle{definition}
\newtheorem{definition}[corollary]{Definition}

\newtheorem{example}{Example}[section]

\newtheorem{remark}{Remark}[section]
\newtheorem{remarks}[remark]{Remarks}
\newtheorem{note}{Note}
\newtheorem{case}{Case}

\numberwithin{equation}{section}
\numberwithin{table}{section}
\numberwithin{figure}{section}

\title[BDF3 method on variable grids]
{
 Variable step-size BDF3 method for Allen-Cahn equation}

\author[Minghua Chen]{Minghua Chen}
\address{School of Mathematics and Statistics, Gansu Key Laboratory of Applied Mathematics and Complex Systems,
 Lanzhou University, Lanzhou 730000, P.R. China}
\email {\href{mailto:chenmh@lzu.edu.cn}{chenmh{\it @\,}lzu.edu.cn}}

\author[Fan Yu]{Fan Yu}
\address{School of Mathematics and Statistics, Gansu Key Laboratory of Applied Mathematics and Complex Systems,
 Lanzhou University, Lanzhou 730000, P.R. China}
\email {\href{mailto:yuf20@lzu.edu.cn}{yuf20{\it @\,}lzu.edu.cn}}

\author[Qingdong Zhang]{Qingdong Zhang}
\address{School of Mathematics and Statistics, Gansu Key Laboratory of Applied Mathematics and Complex Systems,
 Lanzhou University, Lanzhou 730000, P.R. China}
 \email {\href{mailto:zhangqd19@lzu.edu.cn}{zhangqd19{\it @\,}lzu.edu.cn}}

\author[Zhimin Zhang]{Zhimin Zhang}
\address{Department of Mathematics, Wayne State University, Detroit, MI 48202, USA}
\email {\href{mailto:zmzhang@csrc.ac.cn}{zmzhang{\it @\,}csrc.ac.cn}}



\keywords{variable step-size BDF3 method, Allen-Cahn equation, spectral norm inequality, stability and convergence analysis}
\subjclass[2010]{Primary  65L06; Secondary 65M12.}

\begin{abstract}

In this work, we analyze the three-step backward differentiation formula (BDF3) method
 for solving the Allen-Cahn equation on variable grids.
For BDF2 method, the discrete orthogonal convolution (DOC) kernels are positive, the stability and convergence analysis are well established in [Liao and  Zhang, \newblock  Math. Comp., \textbf{90} (2021) 1207--1226; Chen,  Yu, and  Zhang, \newblock SIAM J. Numer. Anal., Major  Revised]. However, the numerical analysis for BDF3 method with variable steps seems to be highly nontrivial, since the DOC kernels are not always positive.
By developing a novel spectral norm inequality,
the unconditional stability  and convergence are rigorously proved under the updated step ratio restriction $r_k:=\tau_k/\tau_{k-1}\leq 1.405$
(compared with $r_k\leq 1.199$ in [Calvo and Grigorieff, \newblock  BIT. \textbf{42} (2002) 689--701]) for BDF3 method.
Finally, numerical experiments are performed  to illustrate the theoretical results.
To the best of our knowledge, this is the first theoretical analysis of variable steps BDF3 method for the Allen-Cahn equation.
\end{abstract}

\maketitle


\section{Introduction}\label{Se:intro}
The  objective of this paper is to present a rigorous stability and convergence analysis of the BDF3 method with variable steps for solving the  Allen-Cahn equation \cite{Allen:79}
\begin{equation}\label{1.1}
\left \{
\begin{aligned}
&\partial_tu-\varepsilon^2\Delta u+f(u)=0, ~~(x,t)\in\Omega\times (0,T],\\
& u(0)=u_0,
\end{aligned}
\right .
\end{equation}
where the nonlinear bulk force is given by $f(u)=F'(u)=u^3-u$, and the parameter $\varepsilon>0$ represents the interface width.
For simplicity, we consider the periodic boundary conditions. The
above Allen-Cahn equation can be viewed as an $L^2$-gradient flow of the following
 free energy functional
\begin{equation}\label{1.2}
E[u]=\int_\Omega\left(\frac{\varepsilon^2}{2}\left|\nabla u\right|^2+F(u)\right)dx
~~{\rm with} ~~F(u)=\frac{1}{4}\left(u^2-1\right)^2.
\end{equation}
In other words, the Allen-Cahn equation \eqref{1.1} admits the  energy dissipation law:
\begin{equation}\label{1.3}
\frac{dE[u]}{dt}=-\int_\Omega\left|\partial_tu\right|^2dx\leq0.
\end{equation}

Let $N\in \N$ and choose the nonuniform time levels $0=t_0<t_1<\cdots<t_N=T$ with the time-step $\tau_k=t_k-t_{k-1}$ for $1\leq k\leq N$. For any time sequence $\{v^n\}_{n=0}^N$, denote
\begin{equation*}
\begin{split}
\nabla_{\tau}v^{n}:=v^n-v^{n-1},\quad\partial_{\tau}v^n:=\nabla_{\tau}v^{n}/\tau_n,~~n\geq 1.
\end{split}
\end{equation*}
For $k=1,2,3$, let $\Pi_{n,k}v$ denote the  Lagrange interpolating polynomial of a function $v$ over $k+1$ nodes  $t_n, t_{n-1}, \ldots, t_{n-k}$.
Define the adjacent time step ratio
\begin{equation*}
\begin{split}
r_k:=\frac{\tau_k}{\tau_{k-1}},~~k\geq 2.
\end{split}
\end{equation*}
Let  $v^n=v(t_n)$.
The BDF3 scheme  is defined by \cite{MCRDG,RDG84,GS,WR:08}
\begin{equation}\label{1.4}
\begin{split}
D_3v^n\!=\!\left(\Pi_{n,3}v\right)'(t_n)
\!=\!b_0^{(n)}\nabla_{\tau}v^{n}+
b_1^{(n)}\nabla_{\tau}v^{n-1}+b_2^{(n)}\nabla_{\tau}v^{n-2}\!=\!\sum_{k=1}^{n}b_{n-k}^{(n)}\nabla_\tau v^k,~~n\geq3.
\end{split}
\end{equation}
Here the coefficients are computed by
\begin{equation}\label{1.5}
\begin{split}
b_0^{(n)}&=\frac{\left(1+r_{n-1}\right)\left[1+2 r_n+r_{n-1}\left(1+4 r_n+3 r_n^2\right)\right]}{\tau_n\left(1+r_n\right)\left(1+r_{n-1}\right)\left(1+r_{n-1}+r_nr_{n-1}\right)},\\
b_1^{(n)}&=-\frac{r^{2}_n\left[(1+2r_{n-1}+r_nr_{n-1})^2-r_{n-1}(1+r_{n-1})\right]}
{\tau_n\left(1+r_n\right)\left(1+r_{n-1}\right)\left(1+r_{n-1}+r_nr_{n-1}\right)},\\
b_2^{(n)}&=\frac{r_n^{2}r^{3}_{n-1}\left(1+r_n\right)^2}{\tau_n\left(1+r_n\right)\left(1+r_{n-1}\right)\left(1+r_{n-1}+r_nr_{n-1}\right)}~~~~\quad~~~~ {\rm with}~~\quad~~ b_{j}^{(n)}=0,~~ j\geq 3.
\end{split}
\end{equation}

Since BDF3 scheme needs three starting values, for concreteness, we use BDF1 scheme and BDF2 scheme, respectively, to compute the first-level
solution $u^1$ and  second-level solution $u^2$, namely,
\begin{equation}\label{1.6}
\begin{split}
D_3v^1:=D_1v^1=\nabla_{\tau}v^{1}/\tau_1,~~~~D_3v^2:=D_2v^2=\frac{1+2 r_2}{\tau_2(1+r_2)}\nabla_{\tau}v^{2}-\frac{ r_2^2}{\tau_2(1+r_2)}\nabla_{\tau}v^{1}.
\end{split}
\end{equation}

We recursively define a sequence of approximations $u^n$ to
the nodal values $u(t_n)$ of the exact solution by BDF3 method
\begin{equation}\label{1.7}
D_3u^n-\varepsilon^2\Delta u^n+f(u^n)=0,~~n\geq1
\end{equation}
with the initial data $u^0=u_0$ and $f(u^n)=\left(u^n\right)^3-u^n$.


The BDF3 operator \eqref{1.4} and \eqref{1.6} are regarded as a discrete convolution summation
\begin{equation}\label{1.8}
\begin{split}
D_3v^n=\sum_{k=1}^{n}b_{n-k}^{(n)}\nabla_\tau v^k,~~n\geq1
\end{split}
\end{equation}
with $b_0^{(1)}=1/\tau_1,~~~~b_{0}^{(2)}=\frac{1+2r_2}{\tau_2(1+r_2)},~~~b_{1}^{(2)}=-\frac{ r_2^2}{\tau_2(1+r_2)}$ in \eqref{1.6}
and $b_{n-k}^{(n)}$ in \eqref{1.5}.

Following the approach of \cite{CYZ:21,LZ}, the discrete orthogonal convolution (DOC) kernels $\{d_{n-k}^{(n)}\}_{k=1}^n$ are defined by
\begin{equation}\label{1.9}
\begin{split}
d_{0}^{(n)}:&=\frac{1}{b_{0}^{(n)}}~~~~{\rm and}~~~~ d_{n-k}^{(n)}:=-\frac{1}{b_0^{(k)}}\sum_{j=k+1}^nd_{n-j}^{(n)}b_{j-k}^{(j)},~~1\leq k\leq n-1.
\end{split}
\end{equation}
Moreover, the DOC kernels $\{d_{n-k}^{(n)}\}_{k=1}^n$ satisfy the discrete orthogonal identity
\begin{equation}\label{1.10}
\begin{split}
\sum_{j=k}^n d_{n-j}^{(n)}b_{j-k}^{(j)}\equiv\delta_{nk},~~1\leq k\leq n.
\end{split}
\end{equation}

For convenience, we introduce the following matrices:
\begin{equation}\label{1.11}
B:=\begin{pmatrix}
b_0^{(1)}&&&&\\
b_1^{(2)}&b_0^{(2)}&&&\\
b_2^{(3)}&b_1^{(3)}&b_0^{(3)}&&\\
&\ddots&\ddots&\ddots&&\\
&&b_{2}^{(n)}&b_1^{(n)}&b_0^{(n)}
\end{pmatrix}
~~{\rm and}~~
D:=\begin{pmatrix}
d_0^{(1)}&&&&\\
d_1^{(2)}&d_0^{(2)}&&&\\
d_2^{(3)}&d_1^{(3)}&d_0^{(3)}&&\\
\vdots&\vdots&\ddots&\ddots&\\
d_{n-1}^{(n)}&d_{n-2}^{(n)}&\cdots&d_1^{(n)}&d_0^{(n)}
\end{pmatrix},
\end{equation}
where the discrete convolution kernels $b_{n-k}^{(n)}$ and the DOC kernels $\{d_{n-k}^{(n)}\}_{k=1}^n$ are defined in \eqref{1.8} and \eqref{1.9}, respectively. From the
discrete orthogonal identity \eqref{1.10}, there exists $D=B^{-1}$.

\begin{remark}
 For BDF2 method, the DOC kernels are positive \cite{CYZ:21,LZ}. However,  the DOC kernels are not always positive in \eqref{1.9} for BDF3 method, which implies   the numerical analysis seems to be highly nontrivial.
\end{remark}

The variable  time-stepping technique is powerful in capturing the multi-scale behaviors (e.g., the solution  changes rapidly in certain regions of time)
of phase fields model including the Allen-Cahn model.
Due to the strong stability (A-stable),  the numerical analysis of variable steps BDF2 method for ODEs and PDEs  receives much attentions including some early works \cite{Be,CL,E,T} and very recent works \cite{CYZ:21,CWYZ,LTZ,LZ}.
However, the numerical analysis for BDF3 method with variable steps seems to be highly nontrivial (compared with the BDF2 method).
As for variable steps BDF3 method for ODEs, Grigorieff et al. prove that it is zero-stable if the adjacent time-step ratio $r_k<1.08$ in \cite{RDG83}
and extend to $r_k<1.292$ in \cite{RDG84}, $r_k<1.462$  in \cite{MC90}.
Based on a spectral radius approach, Guglielmi and Zennaro prove the zero-stability of variable steps BDF3 method for $r_k<1.501$ in \cite{NG01}.
Variable steps implicit-explicit BDF3 method is presented by Wang and Ruuth \cite{WR:08},
where the zero-stability with $r_k<1.501$  is also proved for ODEs.
Recently, the stability is established under the adjacent time-step ratio $r_k<2.553$ of variable steps BDF3 method for ODE problem in \cite{LL}.
The stability of the variable steps BDF3 method for a  parabolic problem is derived by Calvo and Grigorieff  \cite{MCRDG} under the time-step  ratio $r_k\leq 1.199$.
However, it contains a factor  $\exp(C\Gamma_n)$ with $\Gamma_n=\sum_{k=2}^n|r_k-r_{k-1}|$, the quantity $\Gamma_n$ may be unbounded at vanishing step sizes for certain choices of time-steps.
We are unaware of any other published works on the stability analysis of the variable steps BDF3 method for a time-dependent PDEs.
In this paper,  the variable steps BDF3  scheme  is investigated to solve the Allen-Cahn equation.
For the BDF2 method, the associated DOC kernels are positive, the stability and convergence analysis are well established in \cite{CYZ:21,LZ}. However, such analysis technique can not apply to BDF3 method directly, since  the DOC kernels are not always positive in \eqref{1.9}.
By developing a novel spectral norm inequality,
the unconditional stability  and convergence are rigorously proved under the updated step ratio restriction $r_k\leq 1.405$ for BDF3 method, which greatly improve the adjacent time-step ratios $r_k\leq 1.199$ in   \cite{MCRDG}.
As far as we know, this is the first theoretical analysis of variable steps BDF3 method for the Allen-Cahn equation.

The rest of this paper is organized as follows.
In  the next Section, we show that the upper bound of the fixed adjacent time-step ratio is less than $\sqrt{3}$  in a sense of the positive semi-definiteness of the matrix $B$
in \eqref{1.11}.
In Section \ref{Se:variable ratio}, we  prove the variable adjacent time-step ratio $r_k\leq 1.405$, which plays an important role in our numerical analysis.
In Section \ref{Se:solv}, the unique solvability of the variable-steps BDF3 scheme \eqref{1.7} is established in Theorem \ref{Theorem:4.1} by using the fact that the solution of nonlinear scheme is equivalent to the minimization of a convex functional. A discrete energy stability is proved under the adjacent time-step ratio $r_k<1.405$  in Theorem \ref{Theorem:4.2}.
By developing a novel spectral norm inequality in Lemma \ref{Lemma:5.2},  the unconditional stability  and the convergence  of  BDF3 scheme \eqref{1.7} are rigorously proved in Section \ref{Se:stab}.
Finally, numerical experiments are carried out to corroborate the theoretical results.

\section{Estimate for fixed adjacent time-step ratio}\label{sec2}
In this section, we show that the upper bound of fixed adjacent time-step ratio is less than $\sqrt{3}$  in a sense of the positive semi-definiteness
 of the matrix $B$ in \eqref{1.11}.
First, we introduce some lemmas that will be used later.

\begin{proposition}\cite[p.\,28]{Quarteroni:07}\label{pr:2.1}
A matrix $P\in \mathbb{R}^{n\times n}$ is said to be positive definite in $\mathbb{R}^{n}$ if $(Px,x)>0$, $\forall x \in \mathbb{R}^{n}$, $x\neq 0$.
A real matrix $P$ of order $n$ is positive definite  if and only if  its symmetric part $H=\frac{P+P^T}{2}$ is positive definite.
\end{proposition}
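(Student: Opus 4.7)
The plan is to reduce the statement to the observation that for any real matrix $P$ and any $x\in\mathbb{R}^n$ the quadratic form $x^\top P x$ coincides with $x^\top H x$, where $H=(P+P^\top)/2$ is the symmetric part. Once this identity is in hand, the equivalence between positive definiteness of $P$ and of $H$ is immediate from the defining inequality $(Px,x)>0$ for $x\neq 0$.

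First I would note that the definition part of the statement requires nothing to prove; it simply fixes terminology. For the nontrivial claim, I would fix an arbitrary $x\in\mathbb{R}^n$ and exploit the fact that $x^\top P x$ is a scalar (a $1\times 1$ matrix), hence equal to its own transpose:
\begin{equation*}
x^\top P x \;=\; (x^\top P x)^\top \;=\; x^\top P^\top x.
\end{equation*}
Adding these two expressions and dividing by two gives
\begin{equation*}
x^\top P x \;=\; \tfrac{1}{2}\bigl(x^\top P x + x^\top P^\top x\bigr) \;=\; x^\top\!\left(\tfrac{P+P^\top}{2}\right)\! x \;=\; x^\top H x,
\end{equation*}
that is, $(Px,x)=(Hx,x)$ for every $x\in\mathbb{R}^n$.

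From this single identity the equivalence follows trivially: if $P$ is positive definite, then for all $x\neq 0$ we have $(Hx,x)=(Px,x)>0$, so $H$ is positive definite; conversely, if $H$ is positive definite, then $(Px,x)=(Hx,x)>0$ for all $x\neq 0$, so $P$ is positive definite. There is no genuine obstacle here — the argument is a one-line manipulation of transposes — and the only point worth flagging is that the identity $x^\top P x = x^\top P^\top x$ relies on $x$ being real (so that the transpose of the scalar $x^\top P x$ is itself, not its conjugate); over $\mathbb{C}$ one would need to work instead with the Hermitian part $(P+P^{*})/2$.
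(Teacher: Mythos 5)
Your proof is correct: the identity $x^\top P x = x^\top P^\top x$ for real $x$ immediately gives $(Px,x)=(Hx,x)$, from which the equivalence follows. The paper cites this result from Quarteroni et al.\ without proof, and your argument is exactly the standard one that reference uses, so there is nothing to add.
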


\begin{definition}\cite[p.\,13]{Chan:07}\label{de:2.1}
Let  $n \times n$ Toeplitz  matrix  $T_n$ be of the following form
\begin{equation*}
T_n=\left [ \begin{matrix}
                      t_0           &      t_{-1}             &      \cdots         &       t_{2-n}       &       t_{1-n}      \\
                      t_{1}         &      t_{0}              &      t_{-1}         &      \cdots         &       t_{2-n}       \\
                     \vdots         &      t_{1}              &      t_{0}          &      \ddots         &        \vdots        \\
                     t_{n-2}        &      \cdots             &      \ddots         &      \ddots         &        t_{-1}         \\
                     t_{n-1}        &       t_{n-2}           &      \cdots         &       t_1           &        t_0
 \end{matrix}
 \right ];
\end{equation*}
i.e., $t_{i,j}=t_{i-j}$ and $T_n$ is constant along its diagonals. Assume that the diagonals $\{t_k\}_{k=-n+1}^{n-1}$ are the Fourier coefficients of a function
$g$, i.e.,
\begin{equation*}
\begin{split}
t_k=\frac{1}{2\pi}\int_{-\pi}^{\pi}g(x)e^{-ikx}dx.
\end{split}
\end{equation*}
Then the function $g(x)=\sum_{k=1-n}^{n-1} t_k e^{ikx}$ is called the generating function of $T_n$.
\end{definition}

\begin{lemma}\cite[p.\,13-15]{Chan:07}\label{Le:2.1} (Grenander-Szeg\"{o} theorem) Let $T_n$ be given by above matrix with a generating function $g$,
where $g$ is a $2\pi$-periodic continuous real-valued functions defined on $[-\pi,\pi]$.
Let $\lambda_{\min}(T_n)$ and $\lambda_{\max}(T_n)$ denote the smallest and largest eigenvalues of $T_n$, respectively. Then we have
\begin{equation*}
  g_{\min} \leq \lambda_{\min}(T_n) \leq \lambda_{\max}(T_n) \leq g_{\max},
\end{equation*}
where $g_{\min}$ and  $g_{\max}$  is the minimum and maximum values of $g(x)$, respectively.
Moreover, if $g_{\min}< g_{\max}$, then all eigenvalues of $T_n$ satisfies
\begin{equation*}
  g_{\min} < \lambda(T_n) < g_{\max},
\end{equation*}
for all $n>0$. In particular, if  $g_{\min}>0$, then $T_n$ is positive definite.
\end{lemma}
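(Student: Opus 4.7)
The plan is to establish the eigenvalue inequalities via the Rayleigh quotient characterization, after observing that the real-valuedness of $g$ forces $T_n$ to be Hermitian.

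First I would verify that $T_n$ is Hermitian: since $g$ is real, its Fourier coefficients satisfy $t_{-k}=\overline{t_k}$, whence $(T_n)_{j,i}=t_{j-i}=\overline{t_{i-j}}=\overline{(T_n)_{i,j}}$. Consequently the eigenvalues of $T_n$ are real and the extremal ones are given by the Rayleigh quotient
\begin{equation*}
\lambda_{\min}(T_n)=\min_{x\neq 0}\frac{x^{*}T_n x}{x^{*}x},\qquad \lambda_{\max}(T_n)=\max_{x\neq 0}\frac{x^{*}T_n x}{x^{*}x}.
\end{equation*}
Next, for $x=(x_1,\ldots,x_n)^T\in\C^n$ I would introduce the associated trigonometric polynomial $p(\theta):=\sum_{k=1}^n x_k e^{ik\theta}$. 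Substituting $t_{i-j}=\frac{1}{2\pi}\int_{-\pi}^\pi g(\theta)e^{-i(i-j)\theta}\,d\theta$ into the quadratic form and swapping the finite sum with the integral produces
\begin{equation*}
x^{*}T_n x=\frac{1}{2\pi}\int_{-\pi}^\pi g(\theta)\,|p(\theta)|^2\,d\theta,\qquad x^{*}x=\frac{1}{2\pi}\int_{-\pi}^\pi|p(\theta)|^2\,d\theta,
\end{equation*}
the second identity being Parseval's. Since $|p(\theta)|^2\geq 0$ and $g_{\min}\leq g(\theta)\leq g_{\max}$, sandwiching the integrand yields $g_{\min}\leq \frac{x^{*}T_n x}{x^{*}x}\leq g_{\max}$ for every $x\neq 0$; taking infimum and supremum over $x$ gives the non-strict bounds on $\lambda_{\min}(T_n)$ and $\lambda_{\max}(T_n)$.

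To upgrade to strict inequalities under the hypothesis $g_{\min}<g_{\max}$, I would argue by contradiction. If some eigenvalue equals $g_{\min}$, then an associated eigenvector $x\neq 0$ satisfies $\int_{-\pi}^\pi (g(\theta)-g_{\min})|p(\theta)|^2\,d\theta=0$. Continuity of $g$ together with $g_{\min}<g_{\max}$ forces the open set $\{\theta:g(\theta)>g_{\min}\}$ to have positive Lebesgue measure, so $p(\theta)\equiv 0$ on that set. But $p$ is a nonzero trigonometric polynomial of degree at most $n$, whose zero set is finite; this contradicts $x\neq 0$. The symmetric argument at $g_{\max}$ handles the upper bound, so $g_{\min}<\lambda(T_n)<g_{\max}$ for every eigenvalue. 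The final claim, that $g_{\min}>0$ implies $T_n$ is positive definite, is then immediate from $\lambda_{\min}(T_n)\geq g_{\min}>0$.

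The argument is essentially a squeeze estimate, so the main ``obstacle'' is only bookkeeping: carefully passing the Fourier representation of $t_{i-j}$ through the Hermitian quadratic form, and invoking the elementary fact that a nonzero trigonometric polynomial cannot vanish on a set of positive measure. Conceptually the whole proof reduces to Parseval plus the pointwise squeeze $g_{\min}|p|^2\leq g\,|p|^2\leq g_{\max}|p|^2$; no deeper machinery beyond continuity of $g$ is required.
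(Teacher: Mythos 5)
Your proof is correct and is essentially the standard argument for the Grenander--Szeg\H{o} theorem: the paper itself gives no proof of this lemma, citing it directly from Chan and Jin, and the proof in that reference is exactly your Rayleigh-quotient computation $x^{*}T_nx=\frac{1}{2\pi}\int_{-\pi}^{\pi}g(\theta)|p(\theta)|^2\,d\theta$ combined with Parseval and the pointwise squeeze. Your treatment of the strict inequalities (a nonzero trigonometric polynomial cannot vanish on a nonempty open set) is also the standard completion, so there is nothing to correct.
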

\begin{lemma}\cite[p.\,29]{Quarteroni:07}\label{Le:2.2}(Sylvester criterion)
Let $P \in \mathbb{R}^{n\times n}$ be symmetric. Then, $P$ is positive definite
if and only if the dominant principal minors of $P$ are  all positive.
\end{lemma}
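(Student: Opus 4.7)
The plan is to prove the two implications separately, both being classical consequences of the $LDL^T$/Schur-complement structure of symmetric positive definite matrices.

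For necessity, the strategy is to restrict the quadratic form. If $P$ is positive definite and $P_k$ denotes its $k$-th leading principal submatrix, then for any nonzero $y\in\mathbb{R}^k$ the padded vector $x=(y^T,0,\ldots,0)^T\in\mathbb{R}^n$ gives $(Px,x)=(P_k y,y)>0$. Hence $P_k$ is itself positive definite, all its eigenvalues are positive, and its determinant (the $k$-th leading principal minor) is positive.

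For sufficiency I would proceed by induction on $n$. The base case $n=1$ is trivial. In the inductive step, writing
\[
P=\begin{pmatrix} P_{n-1} & c \\ c^T & p_{nn} \end{pmatrix},
\]
the leading principal minors of $P_{n-1}$ coincide with those of $P$ of orders $1,\ldots,n-1$, so by the induction hypothesis $P_{n-1}$ is positive definite and in particular invertible. Applying the congruence transformation with $L=\begin{pmatrix} I_{n-1} & 0 \\ -c^T P_{n-1}^{-1} & 1 \end{pmatrix}$ yields
\[
LPL^T=\begin{pmatrix} P_{n-1} & 0 \\ 0 & p_{nn}-c^T P_{n-1}^{-1}c \end{pmatrix}.
\]
Taking determinants gives $\det P=\det(P_{n-1})\cdot(p_{nn}-c^T P_{n-1}^{-1}c)$, so the Schur complement is positive. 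Both diagonal blocks of $LPL^T$ are therefore positive definite, hence $LPL^T$ is positive definite, and since congruence preserves positive definiteness so is $P$.

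The hard part here is essentially just the bookkeeping: one must verify that the leading principal minors of $P_{n-1}$ truly match the first $n-1$ leading principal minors of $P$, and justify invertibility of $P_{n-1}$ before forming its inverse. There are no analytic estimates and no delicate cancellations, since the Sylvester criterion is a purely algebraic statement; conceptually the argument is just Gaussian elimination without row swaps, re-expressed as a congruence.
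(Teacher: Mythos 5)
Your argument is correct: necessity by restricting the quadratic form to padded vectors, and sufficiency by induction using the congruence $LPL^T$ and the determinant identity $\det P=\det(P_{n-1})\,(p_{nn}-c^TP_{n-1}^{-1}c)$ to extract positivity of the Schur complement, is the standard and complete proof of Sylvester's criterion. The paper itself offers no proof to compare against — it cites the result directly from Quarteroni, Sacco, and Saleri — so your write-up simply supplies the classical textbook argument, and it does so without gaps.
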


\begin{lemma}\label{Le:2.3}
Let   $n \times n$   matrices  $K_{n\times n}$ and $L_{n\times n}$ with  $p_j\neq 0$ be of the following form
\begin{equation*}
K_{n\times n}=\begin{pmatrix}
a_1~&b_2~&c_3&&&\\
b_2~&a_2~&b_3&\ddots&&\\
c_3~&b_3~&a_3~&\ddots~&\ddots&\\
&\ddots&\ddots&\ddots&\ddots&c_n\\
&&\ddots&\ddots~&\ddots&b_n\\
&&&c_n&b_n&a_n
\end{pmatrix}
~~{\rm and}~~
L_{n\times n}=\begin{pmatrix}
p_1&q_2&c_3&&&\\
&p_2&q_3&\ddots&&\\
&&p_3&\ddots&\ddots&\\
&&&\ddots&\ddots&c_n\\
&&&&p_{n-1}&q_n\\
&&&&&p_n
\end{pmatrix}.
\end{equation*}
  Then  the dominant principal minors of $K$ are
$$\det K_{j\times j}=\det L_{j\times j}$$
with
\begin{equation*}
\begin{split}
&p_1=a_1,\quad q_2=b_2,\quad p_2=a_2-\frac{1}{p_1}q_2^2,\\
&q_j=b_j-\frac{q_{j-1}}{p_{j-2}}c_j~~~~{\rm and}~~~~p_j=a_j-\frac{1}{p_{j-2}}c_j^2-\frac{1}{p_{j-1}}q^2_j,~~~~ j\geq3.
\end{split}
\end{equation*}
\end{lemma}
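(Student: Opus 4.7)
The plan is to establish the identity as a consequence of an explicit symmetric $LDL^T$-type factorization of $K$. I would introduce the unit lower-triangular matrix $\widetilde L$ whose only nonzero subdiagonal entries are $\widetilde L_{i,i-1}=q_i/p_{i-1}$ for $i\ge 2$ and $\widetilde L_{i,i-2}=c_i/p_{i-2}$ for $i\ge 3$, together with the diagonal matrix $D=\mathrm{diag}(p_1,\ldots,p_n)$. A direct computation then shows that the given upper-triangular matrix $L$ coincides with $D\widetilde L^T$: the diagonal entries are $p_i\cdot 1=p_i$, the first-superdiagonal entries are $p_i\cdot q_{i+1}/p_i=q_{i+1}$, and the second-superdiagonal entries are $p_i\cdot c_{i+2}/p_i=c_{i+2}$, matching the prescribed structure of $L$ exactly.

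The core of the proof is to verify the factorization $K=\widetilde L D\widetilde L^T$ by checking the three nonzero diagonals of the symmetric pentadiagonal product. Because $\widetilde L$ has only three nonzero subdiagonals, each entry of the product collapses to a short sum: the second-subdiagonal entry reduces to $(c_i/p_{i-2})\cdot p_{i-2}=c_i$; the first-subdiagonal entry becomes $c_iq_{i-1}/p_{i-2}+q_i$, which equals $b_i$ thanks to the recurrence $q_i=b_i-c_iq_{i-1}/p_{i-2}$; and the diagonal entry becomes $c_i^2/p_{i-2}+q_i^2/p_{i-1}+p_i$, equal to $a_i$ by the defining recursion for $p_i$. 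These three matchings, together with symmetry, establish $K=\widetilde L D\widetilde L^T=\widetilde L\,L$.

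The identity on leading principal minors then follows at once. Because $\widetilde L$ is lower triangular and $L$ is upper triangular, the factorization restricts to each leading block, yielding $K_{j\times j}=\widetilde L_{j\times j}\,L_{j\times j}$ for every $j\le n$. Since $\widetilde L_{j\times j}$ has unit diagonal, $\det\widetilde L_{j\times j}=1$, and hence $\det K_{j\times j}=\det L_{j\times j}=\prod_{i=1}^{j}p_i$, which is exactly the asserted equality.

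The main obstacle I anticipate is purely bookkeeping for the small indices $i=1,2$, where the second-subdiagonal contributions $c_i/p_{i-2}$ are absent—this is precisely why the recurrences for $q_j$ and $p_j$ are written to start at $j\ge 3$, with the two base cases $p_1=a_1$ and $p_2=a_2-q_2^2/p_1$ peeled off. The hypothesis $p_j\ne 0$ ensures every quotient appearing in the construction of $\widetilde L$ is well-defined. I would deliberately avoid a direct induction on $\det K_{j\times j}$ through cofactor expansion, since expanding a pentadiagonal determinant mixes several non-principal minors and obscures the clean two-term recursion for $q_j$ and $p_j$; the factorization approach sidesteps this complication entirely.
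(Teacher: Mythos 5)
Your proof is correct and is essentially the paper's argument in explicit form: the paper's one-line proof invokes elementary row operations, and your unit lower-triangular factor $\widetilde L$ is precisely the matrix recording those operations, so the factorization $K=\widetilde L\,L$ with $\det\widetilde L_{j\times j}=1$ is the same mechanism, just carried out and verified in detail. No gaps; the entry-by-entry check of the pentadiagonal product and the restriction of a (lower triangular)$\times$(upper triangular) product to leading blocks are both sound.
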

\begin{proof}
 Using the elementary row operations, the desired result is obtained.
\end{proof}
\subsection{Ratio estimate by Grenander-Szeg\"{o} theorem}\label{2.1.1}
Let the diagonal matrix be
\begin{equation}\label{2.1}
\begin{split}
\Lambda={\rm diag}\left(\tau_1,\tau_2, \ldots, \tau_n  \right).
\end{split}
\end{equation}
From  \eqref{1.11}, we have 
\begin{equation}\label{2.2}
\begin{split}
A:=\Lambda^{1/2}B\Lambda^{1/2}
=\begin{pmatrix}
a_0^{(1)}&&&&\\
a_1^{(2)}&a_0^{(2)}&&&\\
a_2^{(3)}&a_1^{(3)}&a_0^{(3)}&&\\
&\ddots&\ddots&\ddots&\\
&&a_{2}^{(n)}&a_1^{(n)}&a_0^{(n)}
\end{pmatrix}.
\end{split}
\end{equation}
Here the coefficients are computed by
\begin{equation}\label{2.3}
\begin{split}
a_0^{(1)}&=1,~~~~a_{0}^{(2)}=\frac{1+2r_2}{1+r_2},~~~a_{1}^{(2)}=\frac{- r_2^{3/2}}{1+r_2},\\
a_0^{(n)}&=\frac{\left(1+r_{n-1}\right)\left[1+2 r_n+r_{n-1}\left(1+4 r_n+3 r_n^2\right)\right]}{\left(1+r_n\right)\left(1+r_{n-1}\right)\left(1+r_{n-1}+r_nr_{n-1}\right)},\\
a_1^{(n)}&=-\frac{r^{3/2}_n\left[\left(1+2r_{n-1}+r_nr_{n-1}\right)^2-r_{n-1}(1+r_{n-1})\right]}{(1+r_n)(1+r_{n-1})(1+r_{n-1}+r_nr_{n-1})},\\
a_2^{(n)}&=\frac{r_n^{3/2}r^{5/2}_{n-1}(1+r_n)^2}{(1+r_n)(1+r_{n-1})(1+r_{n-1}+r_nr_{n-1})},~~n\geq 3.
\end{split}
\end{equation}
We next estimate the upper adjacent time-step ratio in a sense of $r_{k-1}\equiv r_k$.
\begin{lemma}\label{Le:2.4}
Let (constant) adjacent time-step ratio satisfy  $r_k\leq  r_{\max}=1.716$, $k\geq 2$. Then the matrix $A$ defined in \eqref{2.2} or $B$ in \eqref{1.11} is positive semi-definite.
\end{lemma}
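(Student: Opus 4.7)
The strategy is to reduce positive semi-definiteness of $A$ to that of its symmetric part via Proposition \ref{pr:2.1}, then to apply the Grenander-Szeg\"{o} theorem (Lemma \ref{Le:2.1}) to the Toeplitz bulk of this symmetric part, and finally to handle the boundary rows via Sylvester's criterion (Lemma \ref{Le:2.2}) together with the recursive formula in Lemma \ref{Le:2.3}.

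First, I would invoke Proposition \ref{pr:2.1} to reduce the problem to showing that $H:=(A+A^T)/2$ is positive semi-definite. Since $A$ from \eqref{2.2} is lower triangular with bandwidth two, $H$ is a symmetric pentadiagonal matrix whose entries are $a_0^{(j)}$ on the main diagonal, $a_1^{(j)}/2$ on the first sub/super-diagonal, and $a_2^{(j)}/2$ on the second sub/super-diagonal. Under the constant-ratio assumption $r_k\equiv r$, the expressions in \eqref{2.3} show that $a_0^{(j)}$, $a_1^{(j)}$, $a_2^{(j)}$ stabilize to constants $a_0$, $a_1$, $a_2$ (depending only on $r$) for $j\geq 3$; only the first two rows and columns deviate, due to the BDF1/BDF2 startup in \eqref{1.6}.

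Next, for the Toeplitz bulk (indices $j\geq 3$), the associated symmetric generating function from Definition \ref{de:2.1} is
\begin{equation*}
g(x)=a_0+a_1\cos x+a_2\cos 2x=2a_2\cos^2 x+a_1\cos x+(a_0-a_2),\qquad x\in[-\pi,\pi].
\end{equation*}
Setting $y=\cos x\in[-1,1]$, this is the quadratic $\varphi(y)=2a_2 y^2+a_1 y+(a_0-a_2)$. By Lemma \ref{Le:2.1}, the Toeplitz block is positive semi-definite exactly when $\varphi(y)\geq 0$ on $[-1,1]$. Substituting the $r$-parametrizations from \eqref{2.3} for $a_0(r)$, $a_1(r)$, $a_2(r)$, locating the vertex $y^\ast=-a_1/(4a_2)$ of $\varphi$ (which may or may not lie inside $[-1,1]$ depending on $r$), and comparing with the two endpoints $y=\pm 1$ should yield an algebraic inequality in $r$ whose largest admissible value is the sharp threshold $r_{\max}\approx 1.716$.

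To handle the two non-Toeplitz boundary rows, I would use Sylvester's criterion (Lemma \ref{Le:2.2}): it suffices to show that every leading principal minor of $H$ is positive. Applying Lemma \ref{Le:2.3} with $(a_j,b_j,c_j)=(a_0^{(j)},a_1^{(j)}/2,a_2^{(j)}/2)$, this becomes the claim that the scalars $p_j$ produced by the recursion are all positive. The base cases $p_1=1$ and $p_2=a_0^{(2)}-(a_1^{(2)})^2/(4p_1)$ can be verified directly from \eqref{2.3}, while for $j\geq 3$ the recursion (with constants $a_0,a_1/2,a_2/2$) converges monotonically to a positive fixed point whose positivity is equivalent to $\min_{[-1,1]}\varphi\geq 0$, the same condition already established via Grenander-Szeg\"{o}.

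The chief obstacle is the algebraic analysis that extracts the sharp threshold $r_{\max}=1.716$. The coefficients $a_0,a_1,a_2$ are cumbersome rational functions of $r$, so the minimization of $\varphi$ over $[-1,1]$ requires carefully tracking whether its minimum is attained at $y^\ast$ or at an endpoint, and identifying the polynomial equation in $r$ whose relevant root lies just below $\sqrt{3}\approx 1.732$, thereby explaining the slightly-less-than-$\sqrt{3}$ bound foreshadowed by the introduction.
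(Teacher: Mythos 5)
Your overall strategy differs from the paper's, and it contains genuine gaps. The paper proves Lemma \ref{Le:2.4} in two cases: for $r_k\leq 1$ it shows $A+A^T$ is diagonally dominant and invokes Gerschgorin; for $1<r_k\leq 1.716$ it writes the quadratic form $2\sum_k\omega_k\sum_j a_{k-j}^{(k)}\omega_j$ as an explicit sum of squares with telescoping terms, using the pointwise inequality \eqref{2.4} for the bulk and reducing the startup rows to the positivity of a single $2\times 2$ matrix in $(w_1,w_2)$. Your plan instead rests on Sylvester's criterion applied to all of $H=(A+A^T)/2$ via the pivot recursion of Lemma \ref{Le:2.3}, and the decisive step --- that for $j\geq 3$ the pivots $p_j$ ``converge monotonically to a positive fixed point whose positivity is equivalent to $\min_{[-1,1]}\varphi\geq 0$'' --- is asserted, not proved. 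This is precisely the hard part: controlling a nonlinear three-term pivot recursion uniformly in $j$ is what occupies the whole of Section \ref{Se:variable ratio} of the paper (there with a perturbed matrix and the much smaller threshold $1.405$), and the monotonicity/fixed-point claim is not obvious; indeed subsection \ref{sec2.2} shows that for $r_s=1.732$ some pivot $p_j$ eventually turns negative, so the behaviour of this recursion near the threshold is delicate. Also note that Lemma \ref{Le:2.1} gives only the sufficiency direction ($g_{\min}\geq 0$ implies positive semi-definiteness for each fixed $n$), not the ``exactly when'' you claim, and that once you commit to checking all leading principal minors of $H$ the Grenander--Szeg\"{o} step does no work --- you cannot certify a principal submatrix (the ``bulk'') and the ``boundary rows'' separately and conclude anything about $H$.

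A second, more concrete error is your identification of where $r_{\max}=1.716$ comes from. The generating-function condition $\min_{[-1,1]}\varphi\geq 0$ for the Toeplitz symbol built from \eqref{2.3} holds up to about $1.731$ (this is exactly Remark \ref{re:2.1} and the computation behind \eqref{2.4}), so minimizing $\varphi$ over $[-1,1]$ cannot produce $1.716$. In the paper the binding constraint at $1.716$ is the $2\times 2$ startup block
\begin{equation*}
\begin{pmatrix}
2a_0^{(1)}-a_2^{(3)} & a_1^{(2)}-\tfrac{1}{2}a_1^{(3)}\\[2pt]
a_1^{(2)}-\tfrac{1}{2}a_1^{(3)} & 2a_0^{(2)}-a_2^{(3)}-\tfrac{\left(a_1^{(3)}\right)^2}{4a_2^{(3)}}
\end{pmatrix},
\end{equation*}
i.e., the BDF1/BDF2 starting steps, not the BDF3 Toeplitz bulk. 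So even if your pivot analysis were completed, you would be deriving the wrong threshold from the wrong inequality; the boundary contribution must be isolated and estimated separately, which is exactly what the paper's telescoping decomposition accomplishes.
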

\begin{proof}
Let $y= r_k \leq r_{\max}=1.716$. We prove the desired result in the following two cases.

Case I: $r_k\leq 1$. Since
$$2a_0^{(1)}-\left|a_1^{(2)}\right|-\left|a_2^{(3)}\right|\geq 2-1-1/3=2/3,$$
and
\begin{equation*}
\begin{split}
2a_0^{(2)}-\left|a_1^{(2)}\right|-\left|a_1^{(3)}\right|-\left|a_2^{(4)}\right|=\frac{1}{(1+y)(1+y+y^2)}g(y)
\end{split}
\end{equation*}
with
\begin{equation*}
\begin{split}
g(y)&=2(1+2y)(1+y+y^2)-y^{3/2}(1+y+y^2)-y^{3/2}\left[(1+y)^3-y \right]-y^4-y^5\\
    &\geq 2(1+2y)(1+y+y^2)-y(1+y+y^2)-y\left[(1+y)^3-y \right]-y^4-y^5\\
    &=2+4y+3y^2-2y^4-y^5\geq 2.
\end{split}
\end{equation*}
Moreover, we have
\begin{equation*}
\begin{split}
2a_0^{(n)}-2\left|a_1^{(n)}\right|-2\left|a_2^{(n)}\right|&=\frac{2}{(1+y)(1+y+y^2)}g_1(y),~~n\geq 3
\end{split}
\end{equation*}
with
\begin{equation*}
\begin{split}
g_1(y)
&=(1+y)^3+y^2+2y^3-y^{3/2}(1+y)^3+y^{5/2}-y^4-y^5
\geq (1+y)^3(1-y)+y^2\geq 1.\\
\end{split}
\end{equation*}

From the above inequalities, we know that the symmetric matrix $A+A^T$ in \eqref{2.2} is a diagonally dominant matrix.
Using the Gerschgorin circle theorem, the eigenvalues of $A+A^T$ are greater than zero,
it implies that the matrix $A$ is positive definite by Proposition \ref{pr:2.1}.

Case II: $1< r_k \leq r_{\max}=1.716$.
For any real sequence $\{w_k\}_{k=1}^n$, it holds that
\begin{equation*}
\begin{split}
2\omega_k\sum_{j=1}^ka_{k-j}^{(k)}\omega_j
=&2a_0^{(k)}\omega_k^2+2a_1^{(k)}\omega_k\omega_{k-1}+2a_2^{(k)}\omega_{k}\omega_{k-2}\\
=&a_2^{(k)}\omega^2_k+a_2^{(k)}\left(\frac{a_1^{(k)}}{2a_2^{(k)}}\omega_k+\omega_{k-1}\right)^2-a_2^{(k)}\omega^2_{k-1}-a_2^{(k)}\left(\frac{a_1^{(k)}}{2a_2^{(k)}}\omega_{k-1}+\omega_{k-2}\right)^2\\
&+2\left(a_0^{(k)}-a_2^{(k)}-\frac{\left(a_1^{(k)}\right)^2}{8a_2^{(k)}}\right)\omega_k^2+a_2^{(k)}\left(\omega_k+\frac{a_1^{(k)}}{2a_2^{(k)}}\omega_{k-1}+\omega_{k-2}\right)^2
\end{split}
\end{equation*}
with
$$a_0^{(k)}=a_0^{(3)}, a_1^{(k)}=a_1^{(3)}, a_2^{(k)}=a_2^{(3)},~~ k\geq 3.$$
We can check that
\begin{equation}\label{2.4}
a_0^{(k)}-a_2^{(k)}-\frac{\left(a_1^{(k)}\right)^2}{8a_2^{(k)}}=\frac{y^3}{8a_2^{(k)}(1+y)^2(1+y+y^2)^2}l(y)>0~~{\rm with}~~1\leq y \leq 1.731<\sqrt{3},
\end{equation}
since
\begin{equation*}
\begin{split}
l(y)
&=-8y^7-17y^6+10y^5+43y^4+42y^3+22y^2+4y-1\\
&\geq y\left(10y^4 -8r_{\max}^3y^3-17r_{\max}^2y^3+43y^3+42y^2+22y+3 \right)>0.
\end{split}
\end{equation*}
Using \eqref{2.3} and the above equations,  there exists
\begin{equation*}
\begin{split}
&2\sum_{k=1}^n\omega_k\sum_{j=1}^ka_{k-j}^{(k)}\omega_j
 \geq \left( 2a_0^{(1)}-a_2^{(3)} \right)w_1^2\!+\!\left( 2a_1^{(2)}-a_1^{(3)} \right)w_1w_2
\!+\!\left( 2a_0^{(2)}\!-\!a_2^{(3)}\!-\!\frac{\left(a_1^{(3)}\right)^2}{4a_2^{(3)}} \right)w_2^2\\
&=\left(w_1,w_2\right)
\begin{pmatrix}
2a_0^{(1)}-a_2^{(3)}&a_1^{(2)}-\frac{a_1^{(3)}}{2}\\
a_1^{(2)}-\frac{a_1^{(3)}}{2}&2a_0^{(2)}-a_2^{(3)}-\frac{\left(a_1^{(3)}\right)^2}{4a_2^{(3)}}
\end{pmatrix}
\begin{pmatrix}
w_1\\
w_2
\end{pmatrix}\geq 0,
\end{split}
\end{equation*}
since the dominant principal minors of the above $2\times 2$ matrix are greater than zero if $1< r_k \leq r_{\max}=1.716$.
The proof is completed.
\end{proof}

\begin{remark}\label{re:2.1}
The generating function of BDF3 kernels $b_{n-k}^{(n)}$ in \eqref{1.4} is
\begin{equation}\label{2.5}
\begin{split}
g(x)&=a_0^{(n)}+a_1^{(n)}\cos\varphi+a_2^{(n)}\cos\left(2\varphi\right)
=2a_2^{(n)}x^2+a_1^{(n)}x+\left(a_0^{(n)}-a_2^{(n)}\right),~~n\geq 3
\end{split}
\end{equation}
with ${\cos\varphi=x}$,  $x\in[-1,1]$,  $\varphi\in[-\pi,\pi]$.
From   $2a_2^{(n)}>0$  and  \eqref{2.4}, it implies that $g(x)>0$ if $1< r_k \leq 1.731$.
Then  BDF3 kernels $b_{n-k}^{(n)}$ in \eqref{1.4} are  positive definite by Lemma \ref{Le:2.1}.
In fact, combining with  the proof process of Case I in Lemma \ref{Le:2.4},
the BDF3 kernels $b_{n-k}^{(n)}$ in \eqref{1.4} are  positive definite if $r_k \leq 1.731<\sqrt{3}$.
\end{remark}

\subsection{Ratio estimate by Sylvester criterion}\label{sec2.2}
From subsection \ref{2.1.1}, we know that the matrix  $A$ in \eqref{2.2} is  positive semi-definite with $r_s\leq 1.716$.
Moreover, the BDF3 kernels $b_{n-k}^{(n)}$ in \eqref{1.4} are  positive definite if $r_k \leq 1.731<\sqrt{3}$ in \eqref{2.5}.
In fact, we can check that the generating function  $g(x)<0 $ in \eqref{2.5} if $x=0.434$,  $r_k=1.732$.
However, the positive definiteness  with  $r_k=1.732$  in \eqref{1.4}  by Grenander-Szeg\"{o} theorem is still  in doubt.
Therefore, we need to look for the upper bound estimate  of other forms.

Let $A$ be given in \eqref{2.2}. From Lemma \ref{Le:2.3},
the dominant principal minors of $A+A^{T}$ are
\begin{equation*}
\begin{split}
\det \left(A+A^{T}\right)_{k\times k}=\det L_{k\times k}.
\end{split}
\end{equation*}
Here  the coefficients of $L_{k\times k}$ are
\begin{equation}\label{2.6}
\begin{split}
&p_1=2a_0^{(1)},\quad q_2=a_1^{(2)},\quad p_2=2a_0^{(2)}-\frac{1}{p_1}q_2^2,\\
&q_j=a_1^{(j)}-\frac{q_{j-1}}{p_{j-2}}a_2^{(j)}~~~~{\rm and}~~~~p_j=2a_0^{(j)}-\frac{1}{p_{j-2}}\left(a_2^{(j)}\right)^2-\frac{1}{p_{j-1}}q^2_j,~~~~ j\geq3.
\end{split}
\end{equation}
As a counterexample, we take  $r_s:=r_n=r_{n-1}<\sqrt{3}$ in \eqref{2.6}. According to Sylvester criterion in Lemma \ref{Le:2.2} and \eqref{2.6},
we know that there exists  a dominant principal minor of $A+A^{T}$ is negative, since there exists $p_j<0$, see Figure \ref{Sylvester}.
Hence, the matrix $A$ in \eqref{2.2} or $B$ in \eqref{1.11} is not positive definite if $r_s=1.732<\sqrt{3}$.
\begin{figure}[htb]
  \centering
  \includegraphics[width=6cm]{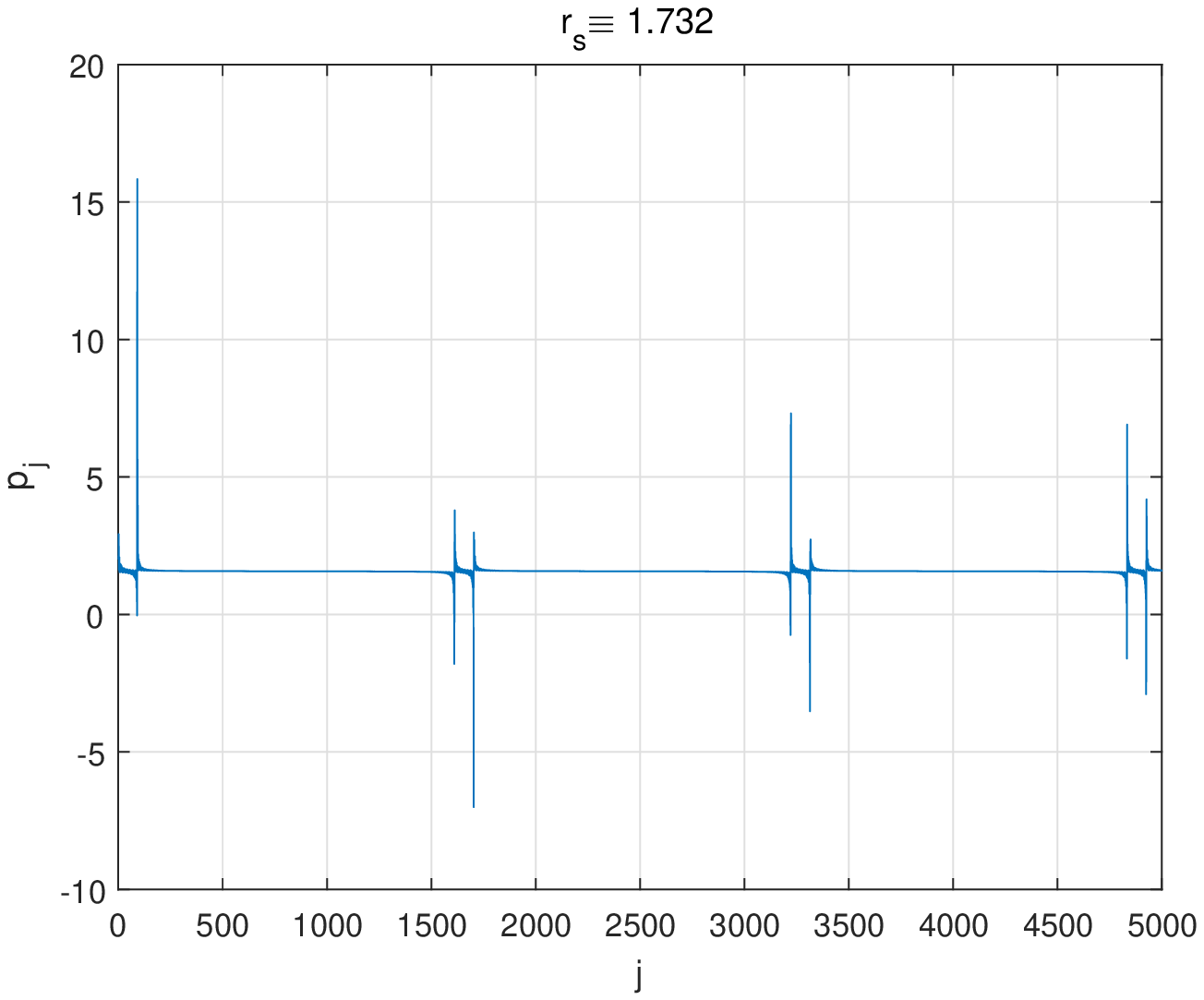}~~
  \includegraphics[width=6cm]{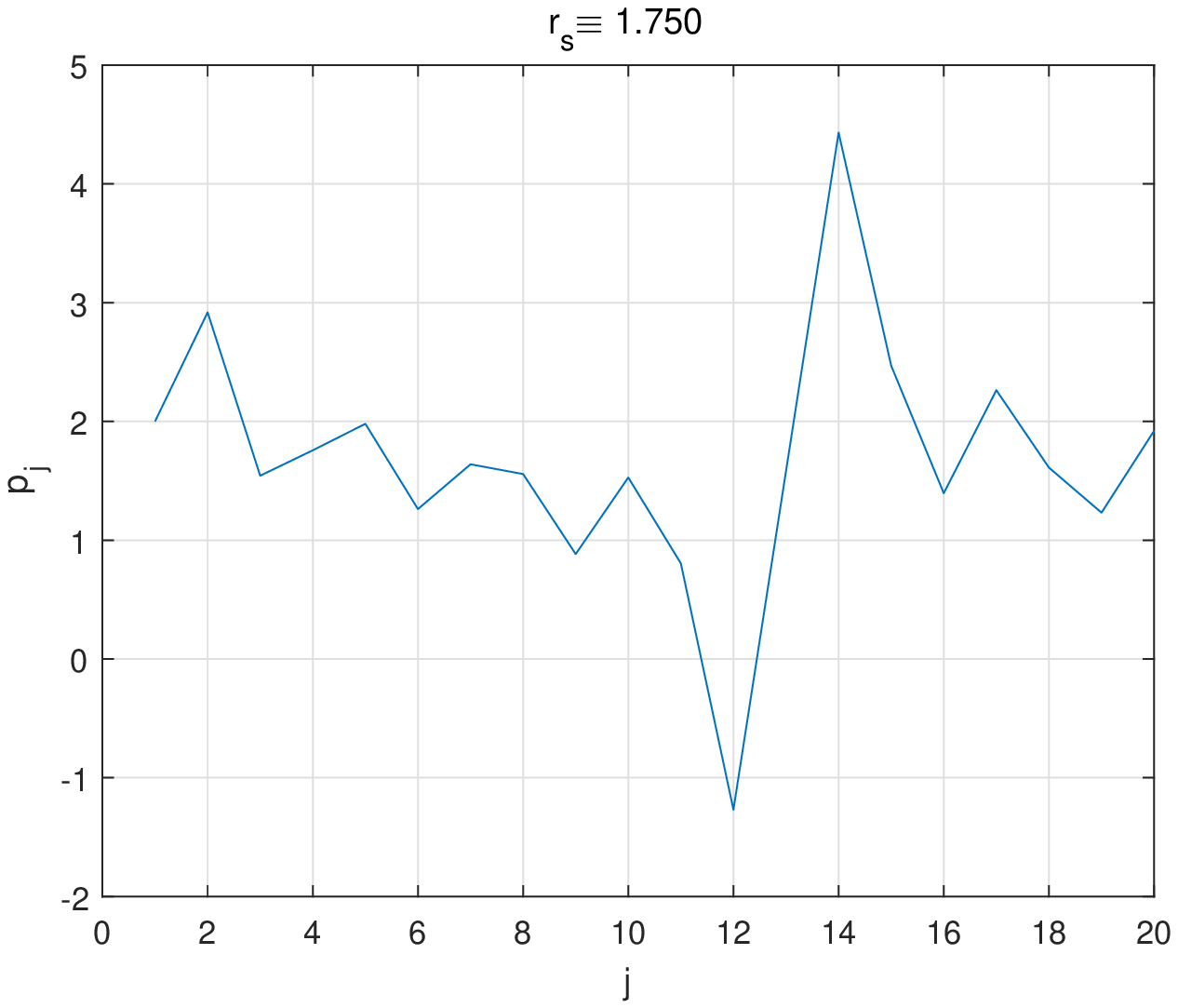}~~
  \caption{\small The graphs of  $p_j$ in \eqref{2.6}, left and right, respectively, for $r_s\equiv 1.732$ and $r_s\equiv 1.750$.   }  \label{Sylvester}
\end{figure}

\section{Estimate of variable  adjacent time-step ratio}\label{Se:variable ratio}
From  Section \ref{sec2}, the upper bound of fixed adjacent time-step ratio is less than $\sqrt{3}$  in a sense of the positive semi-definiteness
 of the matrix $B$ in \eqref{1.11}.
However, it is not easy to obtain an sharp  estimate for the general (variable) adjacent time-step ratio.
In this section, we  prove the  variable adjacent time-step ratio $r_k\leq 1.405$,
which plays an important role in our numerical analysis for the variable steps BDF3 scheme \eqref{1.7}.

Let $B$ and  $\Lambda$ be given in \eqref{1.11} and \eqref{2.1}, respectively. Let
\begin{equation}\label{3.1}
\begin{split}
\widetilde{B}=B-\gamma\Lambda^{-1}~~{\rm with}~~\gamma=1/200.
\end{split}
\end{equation}
From Lemma \ref{Le:2.3}, the dominant principal minors of $\widetilde{B}+\widetilde{B}^{T}$  are
\begin{equation*}
\begin{split}
\det \left(\widetilde{B}+\widetilde{B}^{T}\right)_{j\times j}=\det L_{j\times j}.
\end{split}
\end{equation*}
Here the coefficients of $L$ are computed by
\begin{equation}\label{3.2}
\begin{split}
&p_1=\widehat{b}_0^{(1)},\quad q_2=b_1^{(2)},\quad p_2=\widehat{b}_0^{(2)}-\frac{1}{p_1}q_2^2,\\
&q_j=b_1^{(j)}-\frac{q_{j-1}}{p_{j-2}}b_2^{(j)}~~~~{\rm and}~~~~p_j=\widehat{b}_0^{(j)}-\frac{1}{p_{j-2}}\left(b_2^{(j)}\right)^2-\frac{1}{p_{j-1}}q^2_j,~~j\geq3
\end{split}
\end{equation}
with
\begin{equation}\label{3.3}
\begin{split}
\widehat{b}_0^{(1)}&=1.99/\tau_1,~~~~\widehat{b}_{0}^{(2)}=\frac{1.99+3.99r_2}{\tau_2(1+r_2)},~~~b_{1}^{(2)}=\frac{- r_2^2}{\tau_2(1+r_2)},\\
\widehat{b}_0^{(n)}&=\frac{\left(1+r_{n-1}\right)\left[1.99+3.99 r_n+r_{n-1}\left(1.99+7.98 r_n+5.99 r_n^2\right)\right]}{\tau_n(1+r_n)(1+r_{n-1})(1+r_{n-1}+r_nr_{n-1})},\\
b_1^{(n)}&=-\frac{r^{2}_n\left[(1+2r_{n-1}+r_nr_{n-1})^2-r_{n-1}(1+r_{n-1})\right]}{\tau_n(1+r_n)(1+r_{n-1})(1+r_{n-1}+r_nr_{n-1})},\\
b_2^{(n)}&=\frac{r_n^{2}r^{3}_{n-1}(1+r_n)^2}{\tau_n(1+r_n)(1+r_{n-1})(1+r_{n-1}+r_nr_{n-1})},~~n\geq 3.
\end{split}
\end{equation}
The main aim of this part is to estimate  the following  inequality
\begin{equation*}
\begin{split}
\frac{\lambda_{\min}}{\tau_j}\leq  p_j\leq \frac{\lambda_{\max}}{\tau_j},~~j\geq 1 ~~{\rm with}~~\lambda_{\min}=1.99,~~\lambda_{\max}=3.99,
\end{split}
\end{equation*}
it implies that the matrix $A$ in \eqref{2.2} or $B$ in \eqref{1.11} is  positive definite by Sylvester criterion.

\subsection{A few technical lemmas}
First, we give some lemmas that will be used later.
\begin{lemma}\label{Le:3.1}
Let  $\Psi(x,y)$ with $(x,y)\in[0,r_s]\times[0,r_s]$, $r_s=1.405$ be defined by
\begin{equation*}
\begin{split}
\Psi(x,y)=\frac{(1+2y+xy)^2-y(1+y)}{(1+y)(1+y+xy)}-\frac{\kappa y^4(1+x)^2}{(1+y)^2(1+y+xy)}.
\end{split}
\end{equation*}
Here the coefficients are defined by
\begin{equation*}
\begin{split}
\kappa\in [\kappa_{\min},\kappa_{\max}],~~\kappa_{\min}=0.25, \kappa_{\max}=1.4.
\end{split}
\end{equation*}
Then we have
\begin{equation*}
\begin{split}
1 \leq \Psi(x,y)\leq2.7.
\end{split}
\end{equation*}
\end{lemma}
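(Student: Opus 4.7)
The coefficient of $\kappa$ in $\Psi(x,y)$, namely $-y^4(1+x)^2/[(1+y)^2(1+y+xy)]$, is nonpositive on $[0,r_s]^2$, so $\Psi$ is monotonically nonincreasing in $\kappa$. Hence it suffices to verify the upper bound $\Psi\le 2.7$ at $\kappa=\kappa_{\min}=0.25$ and the lower bound $\Psi\ge 1$ at $\kappa=\kappa_{\max}=1.4$. Multiplying through by the common denominator $(1+y)^2(1+y+xy)$, which is strictly positive on the rectangle, the two claims become the polynomial inequalities $P_{\rm up}(x,y)\ge 0$ and $P_{\rm low}(x,y)\ge 0$ on $[0,1.405]^2$, where
\begin{align*}
P_{\rm up}(x,y) &= 2.7\,(1+y)^2(1+y+xy) + 0.25\,y^4(1+x)^2 - (1+y)\bigl[(1+2y+xy)^2-y(1+y)\bigr],\\
P_{\rm low}(x,y) &= (1+y)\bigl[(1+2y+xy)^2-y(1+y)\bigr] - 1.4\,y^4(1+x)^2 - (1+y)^2(1+y+xy).
\end{align*}

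For the lower bound, direct substitution gives $P_{\rm low}(x,0)=0$ identically in $x$, so $\Psi=1$ is attained along the whole edge $y=0$ and $y$ divides $P_{\rm low}$. My plan is to compute $P_{\rm low}(x,y)=y\,R(x,y)$ by expansion and then verify $R(x,y)\ge 0$ on $[0,r_s]^2$. A useful sanity check is $\partial_y\Psi|_{y=0}=1+x\ge 0$, which confirms that $\Psi$ increases away from the minimizing edge and so the factorization is the right one. The verification of $R\ge 0$ can be organized either via a sum-of-squares style decomposition of $R(x,y)$, or, more concretely, by viewing $R$ as a polynomial in $x$ of low degree whose coefficient functions of $y$ are then checked on $[0,r_s]$ by elementary calculus.

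For the upper bound the margin is tight: numerically $\Psi(r_s,r_s)\approx 2.634$ at $\kappa=0.25$, leaving only about $0.066$ to spare, and the extremum occurs in the interior of the far edge rather than at $y=0$. The plan is to rewrite $P_{\rm up}(x,y)$ as a quadratic in $x$ with coefficients in $y$, complete the square in $x$, and reduce the claim to a univariate polynomial inequality on $[0,r_s]$ which can be settled by explicit root analysis (or by partitioning $[0,r_s]$ into a few subintervals on which the one-variable polynomial is monotone). The principal obstacle is precisely this tightness: any crude monomial estimate such as $x\le r_s$ applied term-by-term destroys the cancellation between the dominant pieces $2.7\,(1+y)^2(1+y+xy)$ and $(1+y)(1+2y+xy)^2$, so one must carry these two blocks together, expand their difference, and only then estimate the residual polynomial, which is much smaller and manageable on the given rectangle.
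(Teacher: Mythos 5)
Your setup is sound and parallels the paper's: the monotonicity of $\Psi$ in $\kappa$ (so that the lower bound need only be checked at $\kappa_{\max}$ and the upper bound at $\kappa_{\min}$), the reduction to two polynomial sign conditions on $[0,r_s]^2$ after clearing the positive denominator, and the observation that $\Psi\equiv 1$ on the edge $y=0$ are all correct and consistent with what the paper does. The numerical landmarks you quote ($\Psi(r_s,r_s)\approx 2.634$ at $\kappa=0.25$, $\partial_y\Psi|_{y=0}=1+x$) also check out. The problem is that everything after the setup is a promissory note: you never exhibit the factor $R(x,y)$ in $P_{\rm low}=y\,R(x,y)$ or prove $R\ge 0$, and for the upper bound you only announce that a completion of the square ``can be settled by explicit root analysis.'' Since this lemma \emph{is} those two verifications, the decisive content of the proof is missing. (A further small imprecision: the $x^2$-coefficient of $P_{\rm up}$ is $y^2\left(0.25y^2-1-y\right)<0$, so $P_{\rm up}$ is \emph{concave} in $x$; completing the square then yields an upper, not a lower, bound, and the correct reduction is to the two edge polynomials $P_{\rm up}(0,y)$ and $P_{\rm up}(r_s,y)$ rather than to a single univariate inequality at the vertex.)

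For comparison, the paper closes these steps by explicit algebraic regrouping rather than by edge/vertex reduction. For the lower bound it writes the numerator of $\Psi-1$ as
$\left(y^2+\tfrac{3}{2}y^3-\kappa y^4\right)(1+x)^2+\tfrac{1}{2}\left(1-x^2\right)y^3+(2+2x)y^2+(1+x)y$,
and checks each block is nonnegative using $\kappa\le 1.4<\tfrac32$ and $y\le r_s$. For the upper bound it writes the numerator of $\Psi-2.7$ as $\left(y^3-y^2-\kappa y^4\right)(1+x)^2+\Psi_1(x,y)$, kills the first block via $-\kappa y^2+y-1\le -0.25y^2+y-1\le 0$ (here $\kappa\ge\kappa_{\min}$ enters), and bounds $\Psi_1\le y\,\Psi_2(x,y)$ where $\Psi_2$ is a quadratic in $y$ with negative discriminant, hence negative. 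Your route would likely also work once the concavity-in-$x$ reduction and the two resulting univariate polynomial checks are actually carried out, but as written the proof is incomplete precisely where the work lies.
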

\begin{proof}
We can check that
\begin{equation*}
\begin{split}
\Psi(x,y)
&=1+\frac{\left(y^2+y^3-\kappa y^4\right)(1+x)^2+(1+x)y(1+y)^2}{(1+y)^2(1+y+xy)}\\
&=1+\frac{\left(y^2+\frac{3}{2}y^3-\kappa y^4\right)(1+x)^2+(1+x)y(1+y)^2-\frac{1}{2}y^3(1+x)^2}{(1+y)^2(1+y+xy)}\\
&=1+\frac{\left(y^2+\frac{3}{2}y^3-\kappa y^4\right)(1+x)^2+\frac{1}{2}(1-x^2)y^3+(2+2x)y^2+(1+x)y}{(1+y)^2(1+y+xy)}\geq 1.
\end{split}
\end{equation*}
Here we use
\begin{equation*}
\begin{split}
y^2+\frac{3}{2}y^3-\kappa y^4=y^2\left(1+\frac{3}{2}y-\kappa y^2\right)\geq y^2\left(1-\frac{3}{2}y(y-1)\right)\geq 0,
\end{split}
\end{equation*}
and
\begin{equation*}
\begin{split}
&\frac{1}{2}\left(1-x^2\right)y^3+(2+2x)y^2+(1+x)y\\
&\quad=(1+x)y\left(  \frac{1}{2}(1-x)y^2+2y+1  \right)
\geq (1+x)y\left(  -0.3y^2+2y+1  \right)\geq 0.
\end{split}
\end{equation*}
Similarly, we have
\begin{equation*}
\begin{split}
\Psi(x,y)
&=2.7+\frac{(y^3-y^2-\kappa y^4)(1+x)^2+\Psi_1(x,y)}{(1+y)^2(1+y+xy)},
\end{split}
\end{equation*}
where the quadratic function $\Psi_1(x,y)$ is
\begin{equation*}
\begin{split}
\Psi_1(x,y)=&-1.7(1+y)^3+y(1+y)^2-0.7xy(1+y)^2+2y^2(1+x)^2\\
\leq&-1.7(1+y)^2\left(y/1.405+y\right)+y(1+y)^2-0.7xy(1+y)^2+4.81y^2(1+x)\\
\leq&-2.909y(1+y)^2+y(1+y)^2-0.7xy(1+y)^2+4.81y^2(1+x)\\
=&y\Psi_2(x,y)
\end{split}
\end{equation*}
with
$$\Psi_2(x,y)=-\left(1.909+0.7x\right)y^2+\left(0.992+3.41x\right)y-1.909-0.7x.$$
Since the discriminant of root formulas of $\Psi_2(x,y)$ is
\begin{equation*}
\begin{split}
\Delta&=(0.992+3.41x)^2-4(1.909+0.7x)(1.909+0.7x)<0,\\
\end{split}
\end{equation*}
which implies  $\Psi_1(x,y)\leq 0$. Moreover, we have
\begin{equation*}
\begin{split}
y^3-y^2-\kappa y^4=y^2(-\kappa y^2+y-1)\leq y^2(-0.25y^2+y-1)\leq 0.
\end{split}
\end{equation*}
 The proof is completed.
\end{proof}

\begin{lemma}\label{Le:3.2}
Let  $\psi(x,y)$ with $(x,y)\in[0,r_s]\times[0,r_s]$, $r_s=1.405$ be defined by
\begin{equation*}
\begin{split}
\psi(x,y)=-\frac{x^2\left[(1+2y+xy)^2-y(1+y)\right]}{(1+x)(1+y)(1+y+xy)}
+\frac{\kappa_{\max}x^2y^4(1+x)}{(1+y)^2(1+y+xy)}~~{\rm with}~~\kappa_{\max}=1.4.
\end{split}
\end{equation*}
Then we have $\psi(x,y)\leq  0$.
\end{lemma}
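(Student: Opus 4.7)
The plan is to observe that $\psi(x,y)$ is, up to a nonnegative scalar factor, just $-\Psi(x,y)$ evaluated at $\kappa=\kappa_{\max}$, so that the estimate reduces immediately to Lemma \ref{Le:3.1}. The key algebraic step is to factor out $x^2/(1+x)$ from both terms of $\psi$: in the second term, the numerator $x^2 y^4 (1+x)$ can be rewritten as $\frac{x^2}{1+x}\cdot y^4(1+x)^2$, which exactly matches the form appearing in the second term of $\Psi$.

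Carrying this out, I would write
\begin{equation*}
\psi(x,y)=-\frac{x^2}{1+x}\left[\frac{(1+2y+xy)^2-y(1+y)}{(1+y)(1+y+xy)}-\frac{\kappa_{\max}\,y^4(1+x)^2}{(1+y)^2(1+y+xy)}\right]=-\frac{x^2}{1+x}\,\Psi(x,y)\Big|_{\kappa=\kappa_{\max}}.
\end{equation*}
Since $(x,y)\in[0,r_s]\times[0,r_s]$ with $r_s=1.405$, the prefactor $x^2/(1+x)\geq 0$. Applying Lemma \ref{Le:3.1} with $\kappa=\kappa_{\max}=1.4\in[\kappa_{\min},\kappa_{\max}]$ gives $\Psi(x,y)\geq 1>0$, whence $\psi(x,y)\leq 0$, as required.

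There is essentially no obstacle here, since all the analytic work was already absorbed into Lemma \ref{Le:3.1}. The only thing to be careful about is the bookkeeping in the identity above, in particular verifying that the factor $(1+x)$ in the numerator of the second term of $\psi$ combines correctly with the extracted $1/(1+x)$ to reproduce the factor $(1+x)^2$ appearing in $\Psi$. Once that identification is made, the conclusion is immediate.
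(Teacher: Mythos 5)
Your proof is correct, but it takes a different route from the paper's. The identity you propose is exact: multiplying $\Psi(x,y)$ by $-\tfrac{x^2}{1+x}$ turns its second term into $+\tfrac{\kappa x^2y^4(1+x)^2}{(1+x)(1+y)^2(1+y+xy)}=\tfrac{\kappa x^2y^4(1+x)}{(1+y)^2(1+y+xy)}$, which is precisely the second term of $\psi$ at $\kappa=\kappa_{\max}$, and the first terms match trivially; since $\tfrac{x^2}{1+x}\geq 0$ on $[0,r_s]$ and Lemma \ref{Le:3.1} gives $\Psi\geq 1$ for $\kappa=\kappa_{\max}\in[\kappa_{\min},\kappa_{\max}]$, the conclusion $\psi\leq 0$ follows, with no circularity because Lemma \ref{Le:3.1} is proved independently. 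The paper instead argues directly: it clears denominators to write $\psi=\tfrac{x^2}{(1+x)(1+y)^2(1+y+xy)}\psi_1(x,y)$ with $\psi_1=-(1+y)^3-2y(1+x)(1+y)^2-y^2(1+x)^2(1+y)+y(1+y)^2+1.4y^4(1+x)^2$, then bounds $\psi_1\leq y(1+y)\psi_2$ and shows $\psi_2<0$ by monotonicity in $y$. Your reduction is shorter, makes the structural link between Lemmas \ref{Le:3.1} and \ref{Le:3.2} explicit (the two quantities differ by the factor $-x^2/(1+x)$), and in fact yields the slightly stronger bound $\psi(x,y)\leq -x^2/(1+x)$; the paper's version is self-contained and does not rely on the sharper lower bound $\Psi\geq 1$. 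Both are valid.
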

\begin{proof}
We can check
\begin{equation*}
\begin{split}
\psi(x,y)=\frac{x^2}{(1+x)(1+y)^2(1+y+xy)}\psi_1(x,y)
\end{split}
\end{equation*}
with
$$\psi_1(x,y)=-(1+y)^3-2y(1+x)(1+y)^2-y^2(1+x)^2(1+y)+y(1+y)^2+1.4y^4(1+x)^2.$$
Using the above equation and  $1.4y-2<0$, it yields
\begin{equation*}
\begin{split}
\psi_1(x,y)&\leq -2y(1+x)(1+y)^2-y^2(1+x)^2(1+y)+y(1+y)^2+1.4y^3(1+y)(1+x)^2\\
&= y(1+y)\left[  -2(1+x)(1+y)+y(1+x)^2+(1+y)+(1.4y-2)y(1+x)^2  \right]\\
&\leq y(1+y)\psi_2(x,y)
\end{split}
\end{equation*}
with
\begin{equation*}
\begin{split}
\psi_2(x,y)&=  -2(1+x)(1+y)+y(1+x)^2+(1+y).
\end{split}
\end{equation*}
Since the first derivative of  $\psi_2(x,y)$ with respect to $y$ is greater than zero.
Then we have $\psi_2(x,y)\leq \psi_2(x,r_s)<0.$ The proof is completed.
\end{proof}

\begin{lemma}\label{Le:3.3}
Let  $\Phi(x,y)$ with $(x,y)\in[0,r_s]\times[0,r_s]$, $r_s=1.405$ be defined by
\begin{equation*}
\begin{split}
\Phi(x,y)=&\left[1.99+3.99 x+y\left(1.99+7.98 x+5.99 x^2\right)\right](1+x)(1+y)^4(1+y+xy)\\
&-\lambda_{\min}(1+x)^2(1+y)^4(1+y+xy)^2-\frac{1}{\lambda_{\min}}x^{3}y^{5}(1+x)^4(1+y)^2\\
&-\frac{1}{\lambda_{\min}}x^3\left[\left(1+2y+2xy\right)\left(1+y\right)^2+y^2\left(1+x\right)^2\left(1+y-\kappa_{\min}y^2\right)\right]^2
\end{split}
\end{equation*}
with $\lambda_{\min}=1.99$, $\kappa_{\min}=0.25$.
Then we have $\Phi(x,y)\geq  0$.
\end{lemma}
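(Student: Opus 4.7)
My plan is to exploit the fact that $\Phi$ is a polynomial in $(x,y)$ on a compact box, and that the leading two terms cancel up to a clean low-order remainder. Specifically, the first step is to expand the combination
\begin{equation*}
\bigl[1.99+3.99x+y(1.99+7.98x+5.99x^{2})\bigr](1+x)-\lambda_{\min}(1+x)^{2}(1+y+xy),
\end{equation*}
which, using $\lambda_{\min}=1.99$, collapses after a direct calculation to $2x(1+x)(1+2y+2xy)$. Pulling out the common factor $(1+y)^{4}(1+y+xy)$ from the first two summands of $\Phi$ and then factoring $x$ from the remaining terms (each of which carries $x^{3}$) yields the cleaner reformulation
\begin{equation*}
\Phi(x,y)=x\,\Phi_{0}(x,y),
\end{equation*}
where
\begin{equation*}
\Phi_{0}(x,y):=2(1+x)(1+2y+2xy)(1+y)^{4}(1+y+xy)-\frac{x^{2}y^{5}(1+x)^{4}(1+y)^{2}}{\lambda_{\min}}-\frac{x^{2}M(x,y)^{2}}{\lambda_{\min}}
\end{equation*}
and $M(x,y):=(1+2y+2xy)(1+y)^{2}+y^{2}(1+x)^{2}(1+y-\kappa_{\min}y^{2})$. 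Since $x\ge0$ on the box, it suffices to show $\Phi_{0}(x,y)\ge0$.

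The second step is to verify $\Phi_{0}\ge0$. At $x=0$ one computes $\Phi_{0}(0,y)=2(1+y)^{5}>0$, so there is a comfortable positive margin that persists for small $x$. For the full range, my strategy is to absorb the $M^{2}$ contribution into the dominant positive term by exploiting the factor $(1+2y+2xy)(1+y)^{2}$ appearing both in $M$ and in the leading term. Writing $M=A+B$ with $A:=(1+2y+2xy)(1+y)^{2}$ and $B:=y^{2}(1+x)^{2}(1+y-\kappa_{\min}y^{2})$, the key inequality becomes
\begin{equation*}
A\!\left[2(1+x)(1+y)^{2}(1+y+xy)-\frac{x^{2}A}{\lambda_{\min}}\right]\ge\frac{x^{2}(2AB+B^{2})+x^{2}y^{5}(1+x)^{4}(1+y)^{2}}{\lambda_{\min}}.
\end{equation*}
Crucially, the bracketed quantity on the left is non-negative on the box (its minimum value, attained at $x=y=r_{s}$, remains positive because $r_{s}^{2}/\lambda_{\min}<1$ and the polynomial factor $(1+x)(1+y)^{2}(1+y+xy)$ beats $A$); this is exactly the same structural estimate that underlies Lemmas~\ref{Le:3.1} and~\ref{Le:3.2}, where the constants $\lambda_{\min}=1.99$ and $\kappa_{\min}=0.25$ were tuned precisely so that such comparisons close.

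The third step is to finish the estimate by a termwise comparison: bound $B\le y^{2}(1+x)^{2}(1+y)$ (dropping the $-\kappa_{\min}y^{2}$ which is favourable), and use $y\le r_{s}$ to replace $y^{5}$ and $B^{2}$ by lower-degree majorants, reducing the problem to a two-variable polynomial inequality of total degree at most four. This reduced inequality can then be handled either by monotonicity in $y$ for fixed $x$ (its derivative in $y$ is easily signed on $[0,r_{s}]$) followed by a one-variable calculation in $x$, or by a direct SOS/discriminant argument analogous to the one used to prove $\Psi_{2}(x,y)\le 0$ in Lemma~\ref{Le:3.1}.

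The principal obstacle is that the threshold $r_{s}=1.405$ is close to tight: numerical inspection suggests $\Phi$ comes within a small margin of vanishing near $(x,y)=(r_{s},r_{s})$, so any crude bounding loses too much. The hard part will therefore be finding a grouping that preserves enough of the positive leading term to dominate the two square-type subtractions precisely at that corner, rather than anywhere deeper in the interior; this is why the calculation is organised so that the factor $A$ appears explicitly in both the leading term and $M$, allowing a gain from factoring $A$ rather than from brute-force estimation.
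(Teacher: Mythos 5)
Your opening reduction is sound and matches the paper's first move: the identity $1.99+3.99x+y(1.99+7.98x+5.99x^2)-\lambda_{\min}(1+x)(1+y+xy)=2x(1+2y+2xy)$ is exactly what the paper uses, and factoring out $x$ to reduce to $\Phi_0\ge 0$ is correct. Your ``key inequality'' with $M=A+B$, $A=(1+2y+2xy)(1+y)^2$, $B=y^2(1+x)^2(1+y-\kappa_{\min}y^2)$ is also a correct restatement, and the bracket $2(1+x)(1+y)^2(1+y+xy)-x^2A/\lambda_{\min}$ is indeed positive on the box (about $77$ at the corner). The problem is that positivity of the bracket is only necessary, not sufficient, and your proposed way of closing the estimate in the third step is fatally lossy. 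At $(x,y)=(r_s,r_s)$ the two sides of your key inequality are approximately $3469$ versus $3466$ --- a margin of under $0.1\%$, consistent with Remark \ref{re:3.1} that $r_s$ is a root of $\Phi(x,x)$. Your bound $B\le y^2(1+x)^2(1+y)$ replaces $1+y-\kappa_{\min}y^2\approx 1.9115$ by $1+y=2.405$ at that corner, inflating $B$ by about $26\%$; the right-hand side then becomes roughly $4244$, which exceeds the left-hand side $3469$. So the ``reduced two-variable polynomial inequality'' you propose to prove is simply false, and no SOS, discriminant, or monotonicity argument can rescue it. You correctly diagnose in your last paragraph that crude bounding loses too much, but the concrete step you propose is precisely such a crude bound.

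By contrast, the paper keeps $1+y-\kappa_{\min}y^2$ essentially intact (bounding it only by $1.912$, a $0.03\%$ relaxation), then expands $\lambda_{\min}\Phi\ge 3.98x(1+2y+2xy)(1+x)(1+y)^4(1+y+xy)-x^3y^5(1+x)^4(1+y)^2-x^3\left[A+1.912y^2(1+x)^2\right]^2$ and regroups the result as $x(1+y)^4\Phi_1+xy(1+x)(1+y)^2\Phi_2+xy^2(1+x)^2\Phi_3$, proving each of $\Phi_1,\Phi_2,\Phi_3$ nonnegative by monotonicity in $x$ and $y$ separately. That three-term grouping is the genuinely hard combinatorial content of the lemma, and it is exactly what your proposal is missing: without an explicit decomposition that stays tight at $(r_s,r_s)$, the argument does not close.
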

\begin{proof}
According to
\begin{equation*}
\begin{split}
1\leq 1+y-\kappa_{\min}y^2\leq 1+r_s-\kappa_{\min}r_s^2<1.912,
\end{split}
\end{equation*}
and
\begin{equation*}
\begin{split}
1.99+3.99 x+y\left(1.99+7.98 x+5.99 x^2\right)-\lambda_{\min}(1+x)(1+y+xy)=2x\left(1+2 y+2 xy\right),
\end{split}
\end{equation*}
it leads to
\begin{equation*}
\begin{split}
\lambda_{\min}\Phi(x,y)\geq& 3.98x\left(1+2 y+2 xy\right)(1+x)(1+y)^4(1+y+xy)-x^{3}y^5(1+x)^4(1+y)^2\\
&-x^3\left[\left(1+2y+2xy\right)(1+y)^2+1.912y^2(1+x)^2\right]^2\\
=&x(1+y)^4\Phi_1(x,y)+xy(1+x)(1+y)^2\Phi_2(x,y)+xy^2(1+x)^2\Phi_3(x,y).
\end{split}
\end{equation*}
Here
\begin{equation*}
\begin{split}
\Phi_1(x,y)&=  -1.13x^2y(1+x)-x^2+3.98x+3.98;\\
\Phi_2(x,y)&=11.94(1+x)(1+y)^2   -2.87x^2(1+y)^2\\
&\quad-2.793x^2y(1+x)(1+y)^2-3.824x^2 y(1+x); \\
\Phi_3(x,y)&=7.96(1+x)(1+y)^4-1.207x^2(1+y)^4-x^2y^3(1+x)^2(1+y)^2\\
&\quad-7.648x^2y(1+x)(1+y)^2-  3.655744x^2y^2(1+x)^2.
\end{split}
\end{equation*}
We can check
$\Phi_1(x,y)\geq   -1.13x^2r_s(1+r_s)-x^2+3.98x+3.98>0$ and
\begin{equation*}
\begin{split}
\Phi_2(x,y)&\geq11.94(1+x)(1+y)^2-2.87x^2(1+y)^2-6.718x^2y(1+y)^2-9.197x^2y\\
&=x(1+y)^2\left(11.94- 6.048xy \right)+h(x,y)\geq h(x,y)
  \end{split}
\end{equation*}
with $h(x,y)=11.94(1+y)^2-2.87x^2(1+y)^2-0.67x^2y(1+y)^2-9.197x^2y$.
Since the first derivative of  $h(x,y)$ with respect to $x$   is less than zero. It implies that $h(x,y)\geq h(r_s,y)$.
Moreover, the first derivative of  $h(r_s,y)$ with respect to $y$ is also less than zero. Then
\begin{equation*}
\begin{split}
\Phi_2(x,y)\geq h(x,y)\geq h(r_s,y)\geq h(r_s,r_s)>0.
\end{split}
\end{equation*}
On the other hand, there exists
\begin{equation*}
\begin{split}
\Phi_3(x,y)\geq xg(x,y)
\end{split}
\end{equation*}
with
\begin{equation*}
\begin{split}
g(x,y)&= 7.96\left(\frac{1}{r_s}+1\right)(1+y)^4-1.207x(1+y)^4-xy^3(1+r_s)^2(1+y)^2\\
&\quad-7.648xy(1+r_s)(1+y)^2-  3.655744xy^2(1+r_s)^2.
\end{split}
\end{equation*}
Since the first derivative of  $g(x,y)$ with respect to $x$   is less than zero. It implies that $g(x,y)\geq g(r_s,y)$. Furthermore, we have $g(r_s,y)\geq g(r_s,r_s)>0$.
Hence, there exists
\begin{equation*}
\begin{split}
\Phi_3(x,y)\geq xg(x,y)\geq xg(r_s,y)\geq xg(r_s,r_s)\geq 0.
\end{split}
\end{equation*}
The proof is completed.
\end{proof}

\begin{lemma}\label{Le:3.4}
Let  $\phi(x,y)$ with $(x,y)\in[0,r_s]\times[0,r_s]$, $r_s=1.405$ be defined by
\begin{equation*}
\begin{split}
\phi(x,y)=&\left[1.99+3.99 x+y\left(1.99+7.98 x+5.99 x^2\right)\right](1+x)(1+y)^4(1+y+xy)\\
&-\lambda_{\max}(1+x)^2(1+y)^4(1+y+xy)^2-\frac{1}{\lambda_{\max}}x^{3}y^{5}(1+x)^4(1+y)^2\\
&-\frac{1}{\lambda_{\max}}x^3\left[\left(1+2y+2xy\right)\left(1+y\right)^2+y^2\left(1+x\right)^2\left(1+y-\kappa_{\max}y^2\right)\right]^2
\end{split}
\end{equation*}
with $\lambda_{\max}=3.99$, $\kappa_{\max}=1.4$. Then we have $\phi(x,y)\leq  0$.
\end{lemma}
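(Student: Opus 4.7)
\medskip

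\noindent\emph{Proof proposal.} The plan is to mirror the structure of Lemma \ref{Le:3.3}, starting from the algebraic identity
\begin{equation*}
1.99+3.99x+y(1.99+7.98x+5.99x^2) - \lambda_{\max}(1+x)(1+y+xy) = -2(1+y-x^2y),
\end{equation*}
which is the $\lambda_{\max}$-analog of the identity $\,\cdots = 2x(1+2y+2xy)\,$ used in Lemma \ref{Le:3.3} at $\lambda_{\min}=1.99$, and which can be verified by direct polynomial expansion. Multiplying by $(1+x)(1+y)^4(1+y+xy)$ and substituting into the definition of $\phi$ yields
\begin{equation*}
\lambda_{\max}\phi(x,y) = -2\lambda_{\max}(1+x)(1+y-x^2y)(1+y)^4(1+y+xy) - x^3y^5(1+x)^4(1+y)^2 - x^3\bigl[\,\cdot\,\bigr]^2,
\end{equation*}
where $[\,\cdot\,]$ denotes the bracket appearing in the statement.

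The last two terms are manifestly $\leq 0$. Writing $1+y-x^2y=1+y(1-x^2)$, the first term is also $\leq 0$ whenever $x\leq 1$, and more generally whenever $x\in(1,r_s]$ and $y\leq 1/(x^2-1)$; in this ``easy'' regime $\phi\leq 0$ is immediate. The remaining case is $x>1$ together with $y(x^2-1)>1$, which forces $y>1/(r_s^2-1)\approx 1.027$. In this regime the first term is strictly positive, and one must show that its size is absorbed by the non-positive contributions from the middle and the squared terms.

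For this hard regime, I would control the factor $1+y-\kappa_{\max}y^2=1+y-1.4y^2$ on $[0,r_s]$, whose range is approximately $[-0.36, 1.18]$, and use it to expand $[\,\cdot\,]^2$ as a sum of squares plus a linear-in-$(1+y-1.4y^2)$ cross-term. Combining with the middle term $x^3y^5(1+x)^4(1+y)^2$ then reduces $\phi\leq 0$ to a polynomial inequality on the box $[0,r_s]^2$, which I would decompose, in analogy with the splitting $\lambda_{\min}\Phi=x(1+y)^4\Phi_1+xy(1+x)(1+y)^2\Phi_2+xy^2(1+x)^2\Phi_3$ of Lemma \ref{Le:3.3}, into a short sum of pieces each of a definite sign, certified by monotonicity in one variable together with evaluation at the corner $(x,y)=(r_s,r_s)$.

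The main obstacle I anticipate is this last polynomial verification. Unlike in Lemma \ref{Le:3.3}, where the analogous factor $1+y-\kappa_{\min}y^2$ stayed in the comfortable range $[1,1.912]$, here \emph{both} the leading factor $1+y-x^2y$ and the inner factor $1+y-\kappa_{\max}y^2$ can change sign on $[0,r_s]^2$. Consequently, the clean linear upper bound on the bracket used in Lemma \ref{Le:3.3} is no longer available; the square must be bounded from below, and the matching has to be sharp. The numerical threshold $r_s=1.405$ was almost certainly calibrated so that this corner verification is tight but still feasible.
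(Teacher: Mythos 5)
Your setup is sound: the identity $1.99+3.99x+y(1.99+7.98x+5.99x^2)-\lambda_{\max}(1+x)(1+y+xy)=-2-2y+2x^2y$ is exactly the one the paper uses, your split into the easy regime (where $1+y-x^2y\ge 0$ and all three terms of $\phi$ are nonpositive) and the hard regime ($x>1$, $y(x^2-1)>1$) is correct, and your diagnosis that in the hard regime the square must be bounded \emph{from below} is precisely the right observation. But the proposal stops at exactly the step that constitutes the lemma: you never produce the lower bound on the bracket, and you never write down or verify the resulting polynomial inequality, instead stating that this ``last polynomial verification'' is ``the main obstacle I anticipate.'' A sketch that defers the only nontrivial estimate is not a proof, and the deferred estimate is not routine --- as your own numerics show, the corner value is within a fraction of a percent of zero, so ``decompose into pieces of definite sign'' is a hope, not an argument, until the pieces are exhibited.

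For comparison, the paper's mechanism is short once you see it. It shows
\begin{equation*}
\left(1+2y+2xy\right)(1+y)^2+y^2(1+x)^2\left(1+y-\kappa_{\max}y^2\right)\;\geq\;2y^2(1+x)(1+y)\;\geq\;0,
\end{equation*}
(the possible negativity of $1+y-1.4y^2$, bounded below by about $-0.36$, is harmless against $2(1+y)^2-2y(1+y)=2(1+y)$), whence $-x^3[\,\cdot\,]^2\leq-4x^3y^4(1+x)^2(1+y)^2$. Dropping the manifestly nonpositive middle term then gives
\begin{equation*}
\lambda_{\max}\phi(x,y)\leq(1+x)^2(1+y)^2y\,\phi_1(x,y)+(1+x)(1+y)^4\phi_2(x,y)
\end{equation*}
with $\phi_1=7.98x^2y(1+y)^2-7(1+y)^3-4x^3y^3$ and $\phi_2=7.98x^2y-7.98(1+y)-0.98y(1+y)(1+x)$, each shown nonpositive by monotonicity in $x$ followed by evaluation at $(r_s,r_s)$. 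Note that this single decomposition handles both of your regimes at once; the easy/hard case split, while correct, is not actually needed. To complete your argument you must supply a concrete nonnegative minorant of the bracket (your proposed ``sum of squares plus a cross-term linear in $1+y-1.4y^2$'' is vaguer and, because that factor changes sign, harder to control than the paper's direct bound) and then exhibit and verify the explicit split of the remaining two-variable polynomial.
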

\begin{proof}
Using
\begin{equation*}
\begin{split}
&\left[\left(2y+2xy\right)(1+y)^2+y^2\left(1+x\right)^2\left(1+y-\kappa_{\max}y^2\right)\right]-2y^2(1+x)(1+y)\\
&\quad \geq y(1+x) \left[2(1+y)^2+y\left(1+y-\kappa_{\max}y^2\right)-2y(1+y)\right]\geq 0,
\end{split}
\end{equation*}
and
$$1.99+3.99 x+y\left(1.99+7.98 x+5.99 x^2\right)-\lambda_{\max}(1+x)(1+y+xy)=-2-2y+2x^2y,$$
it yields
\begin{equation*}
\begin{split}
\lambda_{\max}\phi(x,y)
&\leq\lambda_{\max}\left(-2-2y+2x^2y\right)(1+x)(1+y)^4(1+y+xy)-4x^3y^4\left(1+x\right)^2\left(1+y\right)^2\\
&=(1+x)^2(1+y)^2y\phi_1(x,y)+(1+x)(1+y)^4\phi_2(x,y).
\end{split}
\end{equation*}
Here the functions $\phi_1(x,y)$ and $\phi_2(x,y)$ are, respectively, defined by
\begin{equation*}
\begin{split}
\phi_1(x,y)=&7.98x^2y(1+y)^2-7(1+y)^3-4x^3y^3,
\end{split}
\end{equation*}
and
\begin{equation*}
\begin{split}
\phi_2(x,y)=&7.98x^2y-7.98(1+y)-0.98y(1+y)(1+x)\leq \phi_3(x,y)
\end{split}
\end{equation*}
with
$$\phi_3(x,y)=7.98x^2y-7.98(1+y)-0.98y(1+y).$$
Since the first derivative of  $\phi_1(x,y)$ and $\phi_3(x,y)$ with respect to $x$ is greater than zero. Hence
\begin{equation*}
\begin{split}
&\phi_1(x,y)\leq\phi_1(r_s,y)\leq\phi_1(r_s,r_s)\leq0,\\
&\phi_3(x,y)\leq\phi_3(r_s,y)\leq\phi_3(r_s,r_s)\leq0.
\end{split}
\end{equation*}
The proof is completed.
\end{proof}

\subsection{Estimate for variable time-step ratio by  Sylvester criterion}
We next prove the matrix $A$ in \eqref{2.2} or $B$ in \eqref{1.11} is  positive definite by Sylvester criterion.
\begin{lemma}\label{Le:3.5}
For any  adjacent time-step ratios $0<r_k\leq r_s=1.405$, $k\geq 2$, there exists
\begin{equation}\label{3.4}
 b_1^{(j)}+\mu_j\leq q_j \leq b_1^{(j)}+\nu_j\leq 0,~~~~j\geq 3
\end{equation}
with
\begin{equation}\label{3.5}
\begin{split}	
\mu_j=\frac{\kappa_{\min}r_j^2r_{j-1}^4(1+r_j)}{\tau_j(1+r_{j-1})^2(1+r_{j-1}+r_jr_{j-1})},~~ \nu_j=\frac{\kappa_{\max}r_j^2r_{j-1}^4(1+r_j)}{\tau_j(1+r_{j-1})^2(1+r_{j-1}+r_jr_{j-1})},
\end{split}
\end{equation}
and
\begin{equation}\label{3.6}
\frac{\lambda_{\min}}{\tau_j}\leq  p_j\leq \frac{\lambda_{\max}}{\tau_j},~~~~j\geq 1.
\end{equation}
Here the coefficients are defined by
$$\kappa_{\min}=0.25, \kappa_{\max}=1.4~~{\rm and}~~\lambda_{\min}=1.99, \lambda_{\max}=3.99. $$
\end{lemma}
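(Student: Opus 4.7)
The plan is to prove Lemma~\ref{Le:3.5} by strong induction on $j$, feeding the recursion (\ref{3.2}) into the four preparatory Lemmas~\ref{Le:3.1}--\ref{Le:3.4}. The base cases $j=1,2$ are direct: $p_1=\widehat{b}_0^{(1)}=1.99/\tau_1=\lambda_{\min}/\tau_1$, and for $j=2$, after using $\tau_1/\tau_2=1/r_2$, the bound (\ref{3.6}) reduces to the scalar inequality
\begin{equation*}
\lambda_{\min}\;\leq\;\frac{1.99+3.99\,r_2}{1+r_2}-\frac{r_2^{3}}{\lambda_{\min}(1+r_2)^{2}}\;\leq\;\lambda_{\max},\qquad 0<r_2\leq r_s,
\end{equation*}
which is checked directly on $[0,r_s]$. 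For the inductive step at $j\geq 3$, assume (\ref{3.4}) and (\ref{3.6}) hold at all smaller indices. I would first prove (\ref{3.4}) at $j$ using the IH on $q_{j-1}$ and $p_{j-2}$, and then prove (\ref{3.6}) at $j$ using the IH on $p_{j-1},p_{j-2}$ together with the just-established (\ref{3.4}).

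For (\ref{3.4}) at $j$, write $q_j=b_1^{(j)}+(-q_{j-1}/p_{j-2})\,b_2^{(j)}$. A short computation from (\ref{3.3}) and (\ref{3.5}) gives $\mu_j/b_2^{(j)}=\kappa_{\min}r_{j-1}/(1+r_{j-1})$ and $\nu_j/b_2^{(j)}=\kappa_{\max}r_{j-1}/(1+r_{j-1})$, so (\ref{3.4}) is equivalent to
\begin{equation*}
\frac{\kappa_{\min}\,r_{j-1}}{1+r_{j-1}}\;\leq\;\frac{-q_{j-1}}{p_{j-2}}\;\leq\;\frac{\kappa_{\max}\,r_{j-1}}{1+r_{j-1}}.
\end{equation*}
Factoring the numerator, one identifies $|b_1^{(j-1)}|-\mu_{j-1}$ and $|b_1^{(j-1)}|-\nu_{j-1}$ with $\tfrac{r_{j-1}^{2}}{\tau_{j-1}(1+r_{j-1})}\Psi(r_{j-1},r_{j-2})$ evaluated at $\kappa=\kappa_{\min}$ and $\kappa=\kappa_{\max}$ respectively. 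Combined with the IH $\lambda_{\min}/\tau_{j-2}\leq p_{j-2}\leq\lambda_{\max}/\tau_{j-2}$ and $\tau_{j-1}/\tau_{j-2}=r_{j-1}$, the two-sided inequality collapses to $\Psi\geq\kappa_{\min}\lambda_{\max}=0.9975$ (at $\kappa_{\max}$) on the left and $\Psi\leq\kappa_{\max}\lambda_{\min}=2.786$ (at $\kappa_{\min}$) on the right; both follow at once from Lemma~\ref{Le:3.1} ($1\leq\Psi\leq 2.7$). The same identity at index $j$ turns $b_1^{(j)}+\nu_j\leq 0$ into $\Psi(r_j,r_{j-1})\geq 0$ at $\kappa_{\max}$, again by Lemma~\ref{Le:3.1}.

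For (\ref{3.6}) at $j$, use $p_j=\widehat{b}_0^{(j)}-(b_2^{(j)})^{2}/p_{j-2}-q_j^{2}/p_{j-1}$. From (\ref{3.4}) at $j$ one has $(|b_1^{(j)}|-\nu_j)^{2}\leq q_j^{2}\leq (|b_1^{(j)}|-\mu_j)^{2}$, and the IH supplies $\lambda_{\min}/\tau_{j-i}\leq p_{j-i}\leq\lambda_{\max}/\tau_{j-i}$ for $i=1,2$. Estimating the two subtracted terms by their worst case (minima for the upper bound on $p_j$, maxima for the lower bound) and multiplying through by $\lambda_{\max}(1+r_j)^{2}(1+r_{j-1})^{4}(1+r_{j-1}+r_jr_{j-1})^{2}$ for the upper bound and by $\lambda_{\min}$ times the same factor for the lower bound, the bound $p_j\leq\lambda_{\max}/\tau_j$ reduces to $\phi(r_j,r_{j-1})\leq 0$ and the bound $p_j\geq\lambda_{\min}/\tau_j$ reduces to $\Phi(r_j,r_{j-1})\geq 0$; these are precisely Lemmas~\ref{Le:3.4} and~\ref{Le:3.3}.

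The main obstacle is verifying the polynomial identity
\begin{equation*}
(1+y)\bigl[(1+2y+xy)^{2}-y(1+y)\bigr]-\kappa y^{4}(1+x)^{2}=(1+2y+2xy)(1+y)^{2}+y^{2}(1+x)^{2}(1+y-\kappa y^{2}),
\end{equation*}
which reconciles the $(|b_1^{(j)}|-\mathrm{offset})^{2}$ expression arising naturally from $\Psi$ with the bracketed expression sitting inside $\Phi$ and $\phi$. A careful term-by-term expansion in the monomials $1,y,xy,y^2,xy^2,x^2y^2,\ldots,x^2y^4$ confirms the identity, after which the applications of Lemmas~\ref{Le:3.3} and~\ref{Le:3.4} close the argument. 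The rest is careful but routine substitution.
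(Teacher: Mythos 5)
Your proposal is correct and follows essentially the same route as the paper's proof: induction via the recursion \eqref{3.2}, with the $q_j$ bounds reduced to $\Psi\in[1,2.7]$ against the thresholds $\kappa_{\min}\lambda_{\max}=0.9975$ and $\kappa_{\max}\lambda_{\min}=2.786$ (Lemma \ref{Le:3.1}), and the $p_j$ bounds reduced to $\Phi\geq 0$ and $\phi\leq 0$ (Lemmas \ref{Le:3.3} and \ref{Le:3.4}). The polynomial identity you single out is indeed valid and is exactly the algebra the paper leaves implicit when it rewrites $-\tfrac{\tau_{j-2}}{\lambda}\bigl(b_2^{(j)}\bigr)^2-\tfrac{\tau_{j-1}}{\lambda}\bigl(b_1^{(j)}+\mu_j\bigr)^2$ in terms of $\Phi$ and $\phi$; just note that at $j=3$ the quantity $-q_2/p_1=\tfrac{r_2}{1.99(1+r_2)}$ must be computed exactly rather than taken from \eqref{3.4}, as the paper does in its separate base step.
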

\begin{proof}
From \eqref{3.2} and \eqref{3.3}, we obtain $p_1=\frac{1.99}{\tau_1}$, $q_2=\frac{- r_2^2}{\tau_2(1+r_2)}$ and
$$\frac{\lambda_{\max}}{\tau_2}\geq \frac{\lambda_{\max}}{\tau_2}-\frac{2+2r_2+\frac{1}{1.99}r_2^3}{\tau_2\left(1+r_2\right)^2}=p_2=\frac{\lambda_{\min}}{\tau_2}+\frac{2r_2+2r_2^2-\frac{1}{1.99}r_2^3}{\tau_2\left(1+r_2\right)^2}\geq\frac{\lambda_{\min}}{\tau_2}.$$
Next we prove \eqref{3.4} and \eqref{3.6} by mathematical induction.

For $j=3,$ using Lemma \ref{Le:3.2}, we have
\begin{equation}\label{3.7}
b_1^{(3)}+\mu_3 \leq b_1^{(3)}+\frac{\frac{1}{1.99}r_2^4r_3^2(1+r_3)}{\tau_3(1+r_2)^2(1+r_2+r_2r_3)}=q_3\leq  b_1^{(3)}+\nu_3=\psi(r_3,r_2)\leq 0.
\end{equation}
According to \eqref{3.2}, \eqref{3.3} and \eqref{3.7}, it yields
\begin{equation*}
\begin{split}
p_3&\geq \widehat{b}_0^{(3)}-\frac{\tau_1}{\lambda_{\min}}\left(b^{(3)}_2\right)^2-\frac{\tau_2}{\lambda_{\min}}\left(b_1^{(3)}+\mu_3\right)^2\\
&=\frac{\lambda_{\min}}{\tau_3}+\frac{1}{\tau_3(1+r_3)^2(1+r_{2})^4(1+r_{2}+r_2r_3)^2}\cdot \Phi(r_3,r_2)\geq \frac{\lambda_{\min}}{\tau_3}
\end{split}
\end{equation*}
with  $\Phi(r_3,r_2)\geq 0$ in Lemma \ref{Le:3.3}.

On the other hand, using  \eqref{3.2},  \eqref{3.3} and \eqref{3.7},  one has
\begin{equation*}
\begin{split}
p_3&\leq\widehat{ b}_0^{(3)}-\frac{\tau_1}{\lambda_{\max}}\left(b^{(3)}_2\right)^2-\frac{\tau_2}{\lambda_{\max}}\left(b_1^{(3)}+\nu_3\right)^2\\
&=\frac{\lambda_{\max}}{\tau_3}+\frac{1}{\tau_3(1+r_3)^2(1+r_{2})^4(1+r_2+r_2r_3)^2}\cdot \phi(r_3,r_2)\leq \frac{\lambda_{\max}}{\tau_3}
\end{split}
\end{equation*}
with  $\phi(r_3,r_2)\leq  0$  in Lemma \ref{Le:3.4}.

Supposing that \eqref{3.4} and \eqref{3.6} hold for $j=4,\ldots n-1$, namely,
\begin{equation}\label{3.8}
b_1^{(j)}+\mu_j\leq q_j\leq b_1^{(j)}+\nu_j\leq 0~~{\rm and}~~\frac{\lambda_{\min}}{\tau_j}
\leq p_j\leq \frac{\lambda_{\max}}{\tau_j},~~4\leq j\leq n-1.
\end{equation}
According to  \eqref{3.2}, \eqref{3.3}, \eqref{3.8} and Lemma \ref{Le:3.2}, there exists
\begin{equation*}
\begin{split}
q_n&=b_1^{(n)}-\frac{q_{n-1}}{p_{n-2}}b_2^{(n)}
\leq b_1^{(n)}+\frac{\tau_{n-2}}{\lambda_{\min}}\left(-q_{n-1}\right)b_2^{(n)} \leq b_1^{(n)}+\frac{\tau_{n-2}}{\lambda_{\min}}\left(-b_1^{(n-1)}-\mu_{n-1}\right)b_2^{(n)}\\
&= b_1^{(n)}+\frac{\Psi(r_{n-1},r_{n-2})}{\lambda_{\min}}\frac{r_n^{2}r^{4}_{n-1}(1+r_n)}{\tau_n(1+r_{n-1})^2(1+r_{n-1}+r_nr_{n-1})}\\
&\leq b_1^{(n)}+\nu_n=\psi(r_n,r_{n-1})\leq 0,
\end{split}
\end{equation*}
where $\Psi(r_{n-1},r_{n-2})$ and $\nu_n$ are, respectively, defined by Lemma \ref{Le:3.1} and \eqref{3.5}.

On the other hand, using   \eqref{3.2}, \eqref{3.3} and  \eqref{3.8}, we have
\begin{equation*}
\begin{split}
q_n&=b_1^{(n)}-\frac{q_{n-1}}{p_{n-2}}b_2^{(n)} \geq b_1^{(n)}+\frac{\tau_{n-2}}{\lambda_{\max}}(-q_{n-1})b_2^{(n)} \geq b_1^{(n)}+\frac{\tau_{n-2}}{\lambda_{\max}}\left(-b_1^{(n-1)}-\nu_{n-1}\right)b_2^{(n)}\\
&= b_1^{(n)}+\frac{\Psi(r_{n-1},r_{n-2})}{\lambda_{\max}}\frac{r_n^{2}r^{4}_{n-1}(1+r_n)}{\tau_n(1+r_{n-1})^2(1+r_{n-1}+r_nr_{n-1})} \geq b_1^{(n)}+\mu_n,
\end{split}
\end{equation*}
where $\Psi(r_{n-1},r_{n-2})$ and $\mu_n$ are, respectively, defined by Lemma \ref{Le:3.1} and \eqref{3.5}.

From  \eqref{3.2}, \eqref{3.3} and  \eqref{3.8}, it yields
\begin{equation*}
\begin{split}
p_n&=\widehat{b}_0^{(n)}-\frac{1}{p_{n-2}}\left(b^{(n)}_2\right)^2-\frac{1}{p_{n-1}}q^2_n 
\geq b_0^{(n)}-\frac{\tau_{n-2}}{\lambda_{\min}}\left(b^{(n)}_2\right)^2-\frac{\tau_{n-1}}{\lambda_{\min}}\left(b_1^{(n)}+\mu_n\right)^2\\
&=\frac{\lambda_{\min}}{\tau_n}+\frac{1}{\tau_n(1+r_n)^2(1+r_{n-1})^4(1+r_{n-1}+r_{n-1}r_n)^2}\cdot \Phi(r_n,r_{n-1})\geq \frac{\lambda_{\min}}{\tau_n}
\end{split}
\end{equation*}
with $\Phi(r_n,r_{n-1})\geq 0$ in Lemma \ref{Le:3.3}. Similarly, we have
\begin{equation*}
\begin{split}
p_n&=\widehat{b}_0^{(n)}-\frac{1}{p_{n-2}}\left(b^{(n)}_2\right)^2-\frac{1}{p_{n-1}}q^2_n 
\leq b_0^{(n)}-\frac{\tau_{n-2}}{\lambda_{\max}}\left(b^{(n)}_2\right)^2-\frac{\tau_{n-1}}{\lambda_{\max}}\left(b_1^{(n)}+\nu_n\right)^2\\
&=\frac{\lambda_{\max}}{\tau_n}+\frac{1}{\tau_n(1+r_n)^2(1+r_{n-1})^4(1+r_{n-1}+r_{n-1}r_n)^2}\cdot \phi(r_n,r_{n-1})\leq\frac{\lambda_{\max}}{\tau_n}
\end{split}
\end{equation*}
with $\phi(r_n,r_{n-1})\leq 0$ in Lemma \ref{Le:3.4}.
The proof is completed.
\end{proof}
\begin{remark}\label{re:3.1}
In fact, the upper ratio $r_s=1.405$  is  the root of the polynomial function  $\Phi(x,x)$ arising from Lemma   \ref{Le:3.3}.
\end{remark}

\section{The unique solvability and energy stability}\label{Se:solv}
In this section, we show the unique solvability and discrete energy stability.
Let $(\cdot,\cdot)$ and $\left\|\cdot\right\|$ be the usual inner product and  norm in the space $L^2(\Omega)$, respectively.
\subsection{The unique solvability}
First, we show the unique solvability of the BDF3 scheme  \eqref{1.7} via a discrete energy functional for the Allen-Cahn equation \eqref{1.1}.
\begin{theorem}\label{Theorem:4.1}
If the time-step size $\tau_n<\frac{1+2 r_n+r_{n-1}\left(1+4 r_n+3 r_n^2\right)}{(1+r_n)(1+r_{n-1}+r_nr_{n-1})}$, the variable-steps BDF3
 scheme  \eqref{1.7} is uniquely solvable.
\end{theorem}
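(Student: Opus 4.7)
The plan is to recast the nonlinear equation \eqref{1.7} at time level $t_n$ as the Euler--Lagrange equation of a strictly convex functional, and then invoke the standard fact that a coercive, strictly convex, Fr\'echet-differentiable functional on a Hilbert space admits a unique minimizer (which is therefore the unique critical point).

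First, using \eqref{1.8} I would split off the history and rewrite \eqref{1.7} as
\begin{equation*}
b_0^{(n)} u^n - \varepsilon^2 \Delta u^n + f(u^n) = R^{n-1},
\end{equation*}
where $R^{n-1}:= b_0^{(n)} u^{n-1} - \sum_{k=1}^{n-1} b_{n-k}^{(n)} \nabla_\tau u^k$ depends only on the already computed values $u^0,\dots,u^{n-1}$. Then I would introduce on $H^1_{\mathrm{per}}(\Omega)$ the functional
\begin{equation*}
G(v) := \frac{b_0^{(n)}}{2}\|v\|^2 + \frac{\varepsilon^2}{2}\|\nabla v\|^2 + \int_\Omega F(v)\,\mathrm{d}x - (R^{n-1}, v),
\end{equation*}
with $F(v) = \tfrac14(v^2-1)^2$ as in \eqref{1.2}. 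A direct computation of the first variation shows that $G'(v) = b_0^{(n)} v - \varepsilon^2 \Delta v + f(v) - R^{n-1}$, so the critical points of $G$ are exactly the solutions of the BDF3 scheme at level $n$.

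The core step is strict convexity. I would compute
\begin{equation*}
\bigl\langle G''(v) w, w \bigr\rangle = b_0^{(n)} \|w\|^2 + \varepsilon^2 \|\nabla w\|^2 + \int_\Omega F''(v) w^2 \,\mathrm{d}x,
\end{equation*}
and use $F''(v) = 3v^2 - 1 \geq -1$ to bound this below by $(b_0^{(n)} - 1)\|w\|^2 + \varepsilon^2 \|\nabla w\|^2$. From the explicit formula \eqref{1.5}, the hypothesis $\tau_n < \frac{1+2r_n+r_{n-1}(1+4r_n+3r_n^2)}{(1+r_n)(1+r_{n-1}+r_nr_{n-1})}$ is equivalent to $b_0^{(n)} > 1$, so the quadratic form is strictly positive for $w \neq 0$, giving strict convexity of $G$.

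Finally, since $F(v) \sim v^4/4$ at infinity, $G$ is coercive on $L^4(\Omega) \cap H^1_{\mathrm{per}}(\Omega)$, and is weakly lower semicontinuous and continuous. Therefore $G$ attains a unique global minimizer, and by strict convexity this is the only critical point; hence \eqref{1.7} has a unique solution $u^n$. The main (and only nontrivial) obstacle is verifying that the step-size condition in the statement is exactly the algebraic inequality $b_0^{(n)} > 1$ needed to dominate the non-convex contribution $-w^2$ coming from $F''$; the rest is the standard convex-minimization argument.
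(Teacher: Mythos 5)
Your proposal is correct and follows essentially the same route as the paper: recast the scheme at level $n$ as the Euler--Lagrange equation of a functional containing $\frac{b_0^{(n)}}{2}\|\cdot\|^2$, $\frac{\varepsilon^2}{2}\|\nabla\cdot\|^2$ and $\frac14\|(\cdot)^2-1\|^2$, observe that the stated step-size restriction is exactly $b_0^{(n)}>1$, and use $F''\geq -1$ to get strict convexity and hence a unique minimizer/critical point. Your functional differs from the paper's only by an additive constant (you complete the square in the history terms), and your explicit mention of coercivity via the quartic growth of $F$ is a small but welcome addition that the paper leaves implicit.
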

\begin{proof}
For any fixed time-level indexes $n\geq1,$ we consider the following energy functional
\begin{equation*}
\begin{split}
G[z]:=\frac{b_0^{(n)}}{2}\left\|z-u^{n-1}\right\|^2\!+\!\left(b_1^{(n)}\nabla_\tau u^{n-1}
\!+\!b_2^{(n)}\nabla_\tau u^{n-2},z-u^{n-1}\right)\!+\!\frac{\varepsilon^2}{2}\left\|\nabla z\right\|^2\!+\!\frac{1}{4}\left\| z^2-1\right\|^2.
\end{split}
\end{equation*}
Under the time-step size condition $\tau_n<\frac{1+2 r_n+r_{n-1}\left(1+4 r_n+3 r_n^2\right)}{(1+r_n)(1+r_{n-1}+r_nr_{n-1})}$ or $b_0^{(n)}>1$, the functional $G$ is strictly convex.
In fact, for any $\lambda\in\mathbb{R}$ and  $\psi$, one has
\begin{equation*}
\begin{split}
\left.\frac{d^2G}{d\lambda^2}[z+\lambda\psi]\right|_{\lambda=0}
=b_0^{(n)}\left\|\psi\right\|^2+\varepsilon^2\left\|\nabla\psi\right\|^2+3\left\|z\psi\right\|^2-\left\|\psi\right\|^2
\geq\left(b_0^{(n)}-1\right)\left\|\psi\right\|^2>0.
\end{split}
\end{equation*}
Thus, the functional $G$ has a unique minimizer, denoted by $u^n$, if and only if it solves
\begin{equation*}
\begin{split}
0=\left.\frac{dG}{d\lambda}[z+\lambda\psi]\right|_{\lambda=0}
=\left(b_0^{(n)}\left(z-u^{n-1}\right)+b_1^{(n)}\nabla_\tau u^{n-1}
+b_2^{(n)}\nabla_\tau u^{n-2}-\varepsilon^2\Delta z+f(z),\psi\right).
\end{split}
\end{equation*}
This equation holds for any $\psi$ if and only if the unique minimizer $u^n$ solves
\begin{equation*}
\begin{split}
b_0^{(n)}\left(u^{n}-u^{n-1}\right)+b_1^{(n)}\nabla_\tau u^{n-1}
+b_2^{(n)}\nabla_\tau u^{n-2}-\varepsilon^2\Delta u^{n}+f(u^{n})=0,
\end{split}
\end{equation*}
which is just the BDF3 scheme \eqref{1.7}. The proof is completed.
\end{proof}

\subsection{The discrete energy dissipation law}
From \eqref{3.1} and Lemma \ref{Le:3.5},   for any real sequence $\{w_k\}_{k=1}^n$, it holds that
\begin{equation}\label{4.1}
\begin{split}
\sum_{k=1}^nw_k\sum_{j=1}^k b_{k-j}^{(k)}w_j\geq\gamma\sum_{k=1}^n\frac{w_k^2}{\tau_k},~~n\geq1.
\end{split}
\end{equation}

Let $E(u^n)$ be the discrete version of free energy functional \eqref{1.2}, given by
\begin{equation}\label{4.2}
\begin{split}
E(u^n)=\frac{\varepsilon^2}{2}\left\|\nabla u^n\right\|^2+\frac{1}{4}\left\| \left(u^n\right)^2-1\right\|^2,~~0\leq n\leq N.
\end{split}
\end{equation}
Next theorem shows that the variable steps BDF3 scheme \eqref{1.7} preserves an energy dissipation law at the discrete levels, which implies the energy stability.
\begin{theorem}\label{Theorem:4.2}
Let $r_n\leq 1.405$. If the time-step sizes are properly small such that
\begin{equation}\label{4.3}
\begin{split}
\tau_n\leq \min\left\{\frac{1+2 r_n+r_{n-1}\left(1+4 r_n+3 r_n^2\right)}{(1+r_n)(1+r_{n-1}+r_nr_{n-1})},2\gamma\right\},~~n\geq1.
\end{split}
\end{equation}
Then the variable-steps BDF3  scheme  \eqref{1.7} preserves the following energy dissipation law
\begin{equation}\label{4.4}
\begin{split}
E(u^n)\leq E(u^0),~~n\geq1.
\end{split}
\end{equation}
\end{theorem}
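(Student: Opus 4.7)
The plan is to test the BDF3 scheme \eqref{1.7} at each level $k$ against the backward difference $\nabla_\tau u^k$ in the $L^2$ inner product, sum from $k=1$ to $n$, and then balance three contributions: a quadratic form in $\{\nabla_\tau u^k\}$ coming from the discrete operator $D_3$, the gradient portion of the energy coming from the Laplacian, and the bulk energy coming from $f(u^k)$. For the Laplacian piece I will use the standard identity
\begin{equation*}
\bigl(-\varepsilon^2\Delta u^k,\nabla_\tau u^k\bigr)
=\frac{\varepsilon^2}{2}\|\nabla u^k\|^2-\frac{\varepsilon^2}{2}\|\nabla u^{k-1}\|^2+\frac{\varepsilon^2}{2}\|\nabla\nabla_\tau u^k\|^2,
\end{equation*}
so that after summation the first two pieces telescope into the gradient part of $E(u^k)-E(u^0)$ and the third is a nonnegative remainder.

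For the nonlinear term, since $F(u)=\tfrac14(u^2-1)^2$ is not globally convex, a pointwise Taylor expansion of $F$ about $u^k$ yields, for some intermediate value $\xi^k$ between $u^{k-1}$ and $u^k$,
\begin{equation*}
f(u^k)\bigl(u^k-u^{k-1}\bigr)=F(u^k)-F(u^{k-1})+\tfrac12 F''(\xi^k)\bigl(u^k-u^{k-1}\bigr)^2.
\end{equation*}
Using $F''(u)=3u^2-1\geq -1$, integration on $\Omega$ gives $(f(u^k),\nabla_\tau u^k)\geq \int_\Omega F(u^k)\,dx-\int_\Omega F(u^{k-1})\,dx-\tfrac12\|\nabla_\tau u^k\|^2$, which supplies the bulk part of $E(u^k)-E(u^{k-1})$ at the price of a controllable quadratic error.

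Combining the three contributions, testing and summing for $k=1,\ldots,n$ produces
\begin{equation*}
\sum_{k=1}^n\bigl(D_3 u^k,\nabla_\tau u^k\bigr)+E(u^n)-E(u^0)+\frac{\varepsilon^2}{2}\sum_{k=1}^n\|\nabla\nabla_\tau u^k\|^2\leq \frac12\sum_{k=1}^n\|\nabla_\tau u^k\|^2.
\end{equation*}
At this point I will invoke the quadratic-form inequality \eqref{4.1}, which is the payoff of the Sylvester-criterion/spectral analysis carried out in Section \ref{Se:variable ratio} and encoded in Lemma \ref{Le:3.5}: applied componentwise in $\Omega$ with $w_k\leftarrow \nabla_\tau u^k$, it gives
\begin{equation*}
\sum_{k=1}^n\bigl(D_3 u^k,\nabla_\tau u^k\bigr)\geq \gamma\sum_{k=1}^n\frac{\|\nabla_\tau u^k\|^2}{\tau_k}.
\end{equation*}
Substituting, dropping the nonnegative gradient-jump sum, and using the step-size restriction $\tau_k\leq 2\gamma$ from \eqref{4.3} makes the right-hand side absorbed into the left, leaving exactly $E(u^n)\leq E(u^0)$.

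The main obstacle is not the Laplacian or the bookkeeping but the interaction between the non-convex nonlinearity and the BDF3 convolution quadratic form: the Taylor trick inevitably leaves a term $-\tfrac12\|\nabla_\tau u^k\|^2$ that must be absorbed, and this is precisely why one needs the coercivity constant $\gamma>0$ in \eqref{4.1} to be strictly positive. Establishing this coercivity is what drives the restriction $r_k\leq 1.405$ and the explicit constant $\gamma=1/200$ through the principal-minor analysis of $B-\gamma\Lambda^{-1}$ in Lemma \ref{Le:3.5}; once that is in hand, the rest of the argument is the standard test-and-telescope strategy sketched above.
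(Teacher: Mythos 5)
Your proposal is correct and follows essentially the same route as the paper: test against $\nabla_\tau u^k$, telescope the gradient and bulk energies, accept a $-\tfrac12\|\nabla_\tau u^k\|^2$ defect from the non-convex nonlinearity, and absorb it via the coercivity \eqref{4.1} of the BDF3 quadratic form together with $\tau_k\leq 2\gamma$. The only cosmetic difference is that you obtain the nonlinear-term bound from Taylor's theorem with $F''\geq -1$, whereas the paper uses the explicit identity $4(a^3-a)(a-b)=(a^2-1)^2-(b^2-1)^2-2(a-b)^2+2a^2(a-b)^2+(a^2-b^2)^2$; both yield the identical inequality.
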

\begin{proof}
The first condition of \eqref{4.3} ensures the unique solvability. We will establish the energy dissipation law under the second condition of \eqref{4.3}.
Making the inner product of \eqref{1.7} by $\nabla_{\tau}u^k$, we obtain
\begin{equation}\label{4.5}
\begin{split}
\left(D_3u^k,\nabla_{\tau}u^k\right)-\varepsilon^2\left(\Delta u^k,\nabla_{\tau}u^k\right)+\left(f(u^k),\nabla_{\tau}u^k\right)=0.
\end{split}
\end{equation}
With the help of the inequality $2a(a-b)\geq a^2-b^2$, the second term in \eqref{4.5} reads
\begin{equation*}
\begin{split}
-\varepsilon^2\left(\Delta u^k,\nabla_{\tau}u^k\right)=\varepsilon^2\left(\nabla u^k,\nabla u^k-\nabla u^{k-1}\right)
\geq \frac{\varepsilon^2}{2}\left\|\nabla u^k\right\|^2
-\frac{\varepsilon^2}{2}\left\|\nabla u^{k-1}\right\|^2.
\end{split}
\end{equation*}
It is easy to check the following identity
\begin{equation*}
\begin{split}
4\left(a^3-a\right)\left(a-b\right)=\left(a^2-1\right)^2-\left(b^2-1\right)^2
-2\left(a-b\right)^2+2a^2\left(a-b\right)^2+\left(a^2-b^2\right)^2.
\end{split}
\end{equation*}
Then the third term in \eqref{4.5} can be bounded by
\begin{equation*}
\begin{split}
\left(f(u^k),\nabla_{\tau}u^k\right)\geq\frac{1}{4}\left\| \left(u^k\right)^2-1\right\|^2
-\frac{1}{4}\left\| \left(u^{k-1}\right)^2-1\right\|^2
-\frac{1}{2}\left\|u^k-u^{k-1}\right\|^2.
\end{split}
\end{equation*}
From \eqref{4.5} and the above inequalities, it yields
\begin{equation*}
\begin{split}
\left(D_3u^k,\nabla_{\tau}u^k\right)+E(u^k)-E(u^{k-1})
-\frac{1}{2}\left\|u^k-u^{k-1}\right\|^2\leq0,~~k\geq1.
\end{split}
\end{equation*}
Summing the above inequality from $k=1$ to $n$, we have
\begin{equation*}
\begin{split}
\sum_{k=1}^n\left(D_3u^k,\nabla_{\tau}u^k\right)+E(u^n)-E(u^0)-\frac{1}{2}\sum_{k=1}^n\left\|u^k-u^{k-1}\right\|^2\leq0.
\end{split}
\end{equation*}
According to  \eqref{1.8} and \eqref{4.1}, we obtain
\begin{equation*}
\begin{split}
\sum_{k=1}^n\left(D_3u^k,\nabla_{\tau}u^k\right)=\sum_{k=1}^n\left(\sum_{j=1}^k b_{k-j}^{(k)}\nabla_{\tau}u^j,\nabla_{\tau}u^k\right)
\geq\gamma\sum_{k=1}^n\frac{\left\|u^k-u^{k-1}\right\|^2}{\tau_k},~~n\geq1.
\end{split}
\end{equation*}
Hence, it implies
\begin{equation*}
\begin{split}
\sum_{k=1}^n\left(\frac{\gamma}{\tau_k}-\frac{1}{2}\right)\left\|u^k-u^{k-1}\right\|^2+E(u^n)-E(u^0)\leq0.
\end{split}
\end{equation*}
The second condition of \eqref{4.3} gives the desired result \eqref{4.4}.
\end{proof}

\begin{lemma}\label{Lemma:4.1}
Let $r_n\leq1.405$.  If the step sizes $\tau_n$ fulfill \eqref{4.3}, the solution of the variable steps BDF3 scheme \eqref{1.7} is bounded in the sense that
\begin{equation*}
\begin{split}
\left\|u^n\right\|+\left\|\nabla u^n\right\|\leq c_1:=\sqrt{4\varepsilon^{-2}E(u^0)+\left(2+\varepsilon^2\right)\left|\Omega\right|},~~n\geq1,
\end{split}
\end{equation*}
where $c_1$ is dependent on the domain $\Omega$ and the starting value $u^0$, but
independent of the time $t_n$, the time-step sizes $\tau_n$ and the time-step ratios $r_n$.
\end{lemma}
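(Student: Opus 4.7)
The plan is to deduce the claim directly from the discrete energy dissipation law \eqref{4.4} established in Theorem \ref{Theorem:4.2}. Under the step-size condition \eqref{4.3}, we already know $E(u^n)\le E(u^0)$ for all $n\ge 1$, and by the definition \eqref{4.2} of the discrete energy, this immediately yields
\begin{equation*}
\|\nabla u^n\|^2 \le \tfrac{2}{\varepsilon^2}E(u^0),
\qquad
\|(u^n)^2-1\|^2 \le 4E(u^0).
\end{equation*}
Thus the gradient part of the bound is handled directly, and the remaining task is to control $\|u^n\|$ in terms of the same two quantities.

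For the $L^2$ part, the natural step is to use the identity $(u^n)^2 = ((u^n)^2-1) + 1$. Integrating over $\Omega$ and using Cauchy--Schwarz gives
\begin{equation*}
\|u^n\|^2 = \int_\Omega \bigl[(u^n)^2-1\bigr]\,dx + |\Omega|
\le |\Omega|^{1/2}\,\|(u^n)^2-1\| + |\Omega|
\le 2|\Omega|^{1/2}\sqrt{E(u^0)} + |\Omega|.
\end{equation*}
The cross term $2|\Omega|^{1/2}\sqrt{E(u^0)}$ is then split by Young's inequality, $2\sqrt{xy}\le \alpha x + \alpha^{-1}y$, with the weight $\alpha=\varepsilon^2$ tuned precisely so that the $|\Omega|$-contribution is absorbed into the $(2+\varepsilon^2)|\Omega|$ piece of $c_1^2$ and the $E(u^0)$-contribution into the $4\varepsilon^{-2}E(u^0)$ piece. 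This produces a bound of the form $\|u^n\|^2 \le (1+\varepsilon^2)|\Omega| + \varepsilon^{-2}E(u^0)$.

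To combine the two bounds, I would use $(\|u^n\|+\|\nabla u^n\|)^2\le (1+\alpha)\|u^n\|^2+(1+\alpha^{-1})\|\nabla u^n\|^2$ with the parameter $\alpha$ chosen so that the coefficients line up with those in $c_1^2 = 4\varepsilon^{-2}E(u^0)+(2+\varepsilon^2)|\Omega|$. Taking square roots then gives the desired inequality. Since the right-hand side is expressed entirely in terms of $E(u^0)$, $|\Omega|$ and $\varepsilon$, the resulting constant $c_1$ is independent of $n$, $\tau_n$, and the step ratios $r_n$, as claimed.

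The argument has no real obstacle: Theorem \ref{Theorem:4.2} is the only nontrivial input, and everything else is elementary inequality work. The only point requiring care is the bookkeeping of constants when splitting the cross term $2|\Omega|^{1/2}\sqrt{E(u^0)}$ via Young's inequality and when combining $\|u^n\|+\|\nabla u^n\|$, in order to match exactly the form $\sqrt{4\varepsilon^{-2}E(u^0)+(2+\varepsilon^2)|\Omega|}$ rather than some cruder bound.
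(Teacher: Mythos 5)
Your overall strategy---everything follows from Theorem \ref{Theorem:4.2} plus elementary inequalities---is the same as the paper's, and your treatment of $\|u^n\|$ via $(u^n)^2=((u^n)^2-1)+1$, Cauchy--Schwarz and Young is sound. The gap is in the final recombination, and it is not mere bookkeeping: with your order of operations the stated constant $c_1$ is unreachable. You spend the energy budget twice, once to get $\|\nabla u^n\|^2\le 2\varepsilon^{-2}E(u^0)$ and once to get $\|u^n\|^2\le(1+\varepsilon^2)|\Omega|+\varepsilon^{-2}E(u^0)$, and then combine via $(\|u^n\|+\|\nabla u^n\|)^2\le(1+\alpha)\|u^n\|^2+(1+\alpha^{-1})\|\nabla u^n\|^2$. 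The resulting coefficient of $\varepsilon^{-2}E(u^0)$ is
\begin{equation*}
(1+\alpha)+2\left(1+\alpha^{-1}\right)=3+\alpha+\frac{2}{\alpha}\;\geq\;3+2\sqrt{2}>4
\end{equation*}
for every $\alpha>0$, so you can only prove the lemma with a strictly larger constant than the one stated. Retuning the Young weight in the $\|u^n\|^2$ bound does not rescue this: pushing that weight down far enough to keep the $|\Omega|$ coefficient below $2+\varepsilon^2$ forces the $E(u^0)$ coefficient back above $4\varepsilon^{-2}$ in the regime of small $\varepsilon$.

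The paper avoids the loss by bounding $\|u^n\|^2$ and $\|\nabla u^n\|^2$ \emph{jointly}, drawing on the energy budget only once. It completes the square inside the energy,
\begin{equation*}
\left\|\left(u^n\right)^2-1\right\|^2=\left\|\left(u^n\right)^2-1-\varepsilon^2\right\|^2+2\varepsilon^2\left\|u^n\right\|^2-\varepsilon^2\left(2+\varepsilon^2\right)\left|\Omega\right|,
\end{equation*}
so that $4E(u^0)\geq 4E(u^n)\geq 2\varepsilon^2\left(\left\|\nabla u^n\right\|^2+\left\|u^n\right\|^2\right)-\varepsilon^2\left(2+\varepsilon^2\right)\left|\Omega\right|$, and then applies the plain $(a+b)^2\leq 2a^2+2b^2$. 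Note that your Cauchy--Schwarz-plus-Young step with weight $\varepsilon^2$ is exactly the inequality form of this completed square; the point is to apply it \emph{before} splitting the energy between the gradient and potential terms, not after. If all you need downstream (Theorems \ref{Theorem:5.1} and \ref{Theorem:5.2}) is some time-independent bound, your constant suffices; but to prove the lemma as stated you must restructure along these lines.
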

\begin{proof}
From the discrete energy dissipation law \eqref{4.4} and the definition \eqref{4.2}, it yields
\begin{equation*}
\begin{split}
4E(u^0)&\geq 4E(u^n)=2\varepsilon^2\left\|\nabla u^n\right\|^2+\left\|\left(u^n\right)^2-1\right\|^2\\
&=2\varepsilon^2\left\|\nabla u^n\right\|^2
+\left\|\left(u^n\right)^2-1-\varepsilon^2\right\|^2
+2\varepsilon^2\left\|u^n\right\|^2
-\varepsilon^2\left(2+\varepsilon^2\right)\left|\Omega\right|\\
&\geq2\varepsilon^2\left\|\nabla u^n\right\|^2+2\varepsilon^2\left\|u^n\right\|^2-\varepsilon^2\left(2+\varepsilon^2\right)\left|\Omega\right|.
\end{split}
\end{equation*}
Thus, we obtain
\begin{equation*}
\begin{split}
\left(\left\|u^n\right\|+\left\|\nabla u^n\right\|\right)^2
\leq 2\left\|u^n\right\|^2+2\left\|\nabla u^n\right\|^2
\leq 4\varepsilon^{-2}E(u^0)+\left(2+\varepsilon^2\right)\left|\Omega\right|.
\end{split}
\end{equation*}
The proof is completed.
\end{proof}

\section{Stability and convergence analysis}\label{Se:stab}
In this section, we show the $L^2$ norm unconditional stability and convergence  of the variable-step BDF3 scheme  \eqref{1.7} for the Allen-Cahn equation.

Denote  $\langle\cdot,\cdot\rangle$ the  classical Euclidean scalar product
\begin{equation*}
\begin{split}
\langle \mu,\nu\rangle=\nu^{T}\mu=\sum_{k=1}^n\mu^k\nu^k,~~~
\left|\mu\right|=\langle \mu,\mu\rangle^{1/2}
\end{split}
\end{equation*}
with $\mu=(\mu^1,\mu^2,\cdots,\mu^n)^T$ and $\nu=(\nu^1,\nu^2,\cdots,\nu^n)^T$.
From  \cite[pp.\,23-24]{Quarteroni:07}, we know that the  spectral norm of the matrix $A \in \mathbb{R}^{n\times n}$ satisfies
\begin{equation}\label{5.1}
\left|A\mu\right|\leq \left|A\right|\left|\mu\right|~~~~{\rm with}~~~~\left|A\right|=\sqrt{\rho\left(A^TA\right)}.
\end{equation}
Here the spectral radius $\rho(A)$   is denoted by the maximum module of the eigenvalues of $A$.

\begin{definition}\label{Definition:5.1}
Let $A$ and $B$ be two real $n\times n$ matrices. Then,  $A> B$ ($\geq B$) if $A-B$ is positive definite (positive semi-definite).
\end{definition}

Let $I$  be the  $n\times n$   identity matrix and $\Lambda={\rm diag}\left(\tau_1,\tau_2, \ldots, \tau_n  \right)$ in \eqref{2.1}.
Then we have the following  results.

\begin{lemma}\label{Lemma:5.1}
Let $B>c\Lambda^{-1}$, $c>0$.
Then $\mathscr{A}:=\Lambda^{1/2}B\Lambda^{1/2}>cI$.
\end{lemma}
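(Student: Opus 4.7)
The plan is short: reduce to the hypothesis by a congruence transform using $\Lambda^{1/2}$. First I would rewrite the desired inequality as
$$\mathscr{A}-cI=\Lambda^{1/2}B\Lambda^{1/2}-cI=\Lambda^{1/2}\bigl(B-c\Lambda^{-1}\bigr)\Lambda^{1/2},$$
which is valid because $\Lambda$ is diagonal with strictly positive entries $\tau_k>0$, so $\Lambda^{1/2}$ is well-defined, symmetric, and invertible, and $cI=\Lambda^{1/2}(c\Lambda^{-1})\Lambda^{1/2}$.

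Next I would unwind the definition of positive definiteness from Proposition \ref{pr:2.1} and Definition \ref{Definition:5.1}. Given any $y\in\mathbb{R}^n$ with $y\neq 0$, set $x:=\Lambda^{1/2}y$. Since $\Lambda^{1/2}$ is invertible, $x\neq 0$ as well. Using the symmetry of $\Lambda^{1/2}$,
$$\langle(\mathscr{A}-cI)y,y\rangle=\bigl\langle\Lambda^{1/2}(B-c\Lambda^{-1})\Lambda^{1/2}y,y\bigr\rangle=\bigl\langle(B-c\Lambda^{-1})x,x\bigr\rangle>0,$$
where the final strict inequality is exactly the assumption $B>c\Lambda^{-1}$. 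This establishes $\mathscr{A}>cI$ in the sense of Definition \ref{Definition:5.1}.

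There is no real obstacle here: the only thing to be mindful of is that $B$ (and hence $B-c\Lambda^{-1}$) is not symmetric, so the definition of positive definiteness is the one through the bilinear form, not through eigenvalues; Proposition \ref{pr:2.1} reassures us that this definition is preserved under the congruence by the symmetric, invertible factor $\Lambda^{1/2}$. No additional computation with the $b_j^{(n)}$ entries is required.
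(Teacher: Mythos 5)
Your proof is correct and follows essentially the same route as the paper: both substitute $x=\Lambda^{1/2}y$ and use the congruence identity $\Lambda^{1/2}(B-c\Lambda^{-1})\Lambda^{1/2}=\mathscr{A}-cI$ to transfer positive definiteness. Your version just spells out the invertibility of $\Lambda^{1/2}$ and the non-symmetric convention more explicitly.
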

\begin{proof}
Taking  $x=\Lambda^{1/2}y$ Bith $x\neq 0$, it yields
\begin{equation*}
\begin{split}
0<x^T\left(B- c\Lambda^{-1}\right)x=y^T\Lambda^{1/2}\left(B- c\Lambda^{-1}\right)\Lambda^{1/2}y=y^T\left( \Lambda^{1/2}B\Lambda^{1/2}-cI \right)y.
\end{split}
\end{equation*}
The proof is completed.
\end{proof}
\begin{lemma}[Spectral norm inequality]\label{Lemma:5.2}
Let $A>cI$, $c>0$.
Then  the  spectral norm $\left|A^{-1}\right|<c^{-1}$.
\end{lemma}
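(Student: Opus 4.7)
\bigskip
\noindent\textbf{Proof plan.} The plan is to reduce the claim to a lower bound on the smallest eigenvalue of the symmetric positive definite matrix $A^T A$, since by \eqref{5.1} one has
\begin{equation*}
\left|A^{-1}\right|^{2}=\rho\!\left((A^{-1})^{T}A^{-1}\right)=\rho\!\left((A A^{T})^{-1}\right)=\frac{1}{\lambda_{\min}(A^{T}A)},
\end{equation*}
where I used that $A A^{T}$ and $A^{T}A$ share the same spectrum. So it suffices to show $\lambda_{\min}(A^{T}A)>c^{2}$.

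First, I would unpack the hypothesis $A>cI$ using Proposition \ref{pr:2.1}: this is equivalent to the symmetric part $\tfrac12(A+A^{T})-cI$ being (symmetric) positive definite. Hence for every nonzero $y\in\mathbb{R}^{n}$,
\begin{equation*}
y^{T}A y \;=\; y^{T}\,\tfrac{A+A^{T}}{2}\,y \;>\; c\,|y|^{2}.
\end{equation*}

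Next I would apply the Cauchy--Schwarz inequality to the left-hand side:
\begin{equation*}
c\,|y|^{2} \;<\; y^{T}A y \;=\; \langle y,\,Ay\rangle \;\leq\; |y|\,|Ay|,
\end{equation*}
so that $|Ay|>c\,|y|$ for all nonzero $y$. Squaring gives $y^{T}A^{T}A\,y>c^{2}\,|y|^{2}$, i.e.\ $A^{T}A-c^{2}I$ is symmetric positive definite. Therefore $\lambda_{\min}(A^{T}A)>c^{2}$, and combined with the displayed identity for $|A^{-1}|^{2}$ this yields $|A^{-1}|<c^{-1}$.

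The conceptually delicate point, and the only step where one must be careful, is that $A=\Lambda^{1/2}B\Lambda^{1/2}$ in the intended application is \emph{not} symmetric, so one cannot directly read singular values off the form $x^{T}Ax$; the role of Proposition \ref{pr:2.1} is precisely to convert the non-symmetric definiteness $A>cI$ into a quadratic-form lower bound, after which Cauchy--Schwarz bridges the gap between $x^{T}Ax$ and $|Ax|$. Everything else is routine spectral bookkeeping.
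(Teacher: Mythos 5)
Your proof is correct, and it reaches the same pivotal fact as the paper --- that $A^TA-c^2I$ is symmetric positive definite, hence $\lambda_{\min}(A^TA)>c^2$ and $\left|A^{-1}\right|=\lambda_{\min}(A^TA)^{-1/2}<c^{-1}$ --- but by a different device. The paper expands the square $0<\left|(A-cI)x\right|^2=x^T\left(A^TA-cA^T-cA+c^2I\right)x$ and regroups it as $x^T\left(A^TA-c^2I\right)x>c\,x^T\left(A^T+A-2cI\right)x>0$, using the positive definiteness of both $A-cI$ and $A^T-cI$; you instead pass through the quadratic-form bound $c|y|^2<y^TAy\leq |y|\,|Ay|$ via Cauchy--Schwarz, obtaining $|Ay|>c|y|$ directly. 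Your route is slightly shorter and makes the geometric content transparent (the numerical range of $A$ staying to the right of $c$ forces the smallest singular value above $c$), while the paper's algebraic expansion avoids Cauchy--Schwarz at the cost of the extra observation that $(A-cI)x\neq 0$. Two minor points: your chain $\left|A^{-1}\right|^2=\rho\left((AA^T)^{-1}\right)=1/\lambda_{\min}(A^TA)$ implicitly uses that $A$ is invertible, which you should note follows from $|Ay|>c|y|>0$ for $y\neq 0$; and the appeal to Proposition \ref{pr:2.1} to justify $y^TAy=y^T\frac{A+A^T}{2}y$ is correct but could be stated as the trivial identity it is, independent of that proposition.
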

\begin{proof}
Since
\begin{equation*}
\begin{split}
x^T\left(A- c I\right)x>0, ~~~x^T\left(A^T- c I\right)x>0 \quad\forall x\neq0.
\end{split}
\end{equation*}
Using the  the  classical Euclidean scalar product, it yields
\begin{equation*}
\begin{split}
0<\left|\left(A- c I\right)x\right|^2
&=x^T\left(A^T- c I\right)\left(A- c I\right)x
=x^T\left(A^TA- c A^T- c A+c^2I\right)x\\
&=x^T\left(A^TA- c^2  I- c A^T- c A+2c^2I\right)x \quad\forall x\neq0.
\end{split}
\end{equation*}
According to the above inequalities, we have
\begin{equation}\label{5.2}
\begin{split}
x^T\left(A^TA- c^2  I\right)x
&> c x^T\left(A^T+A-2cI\right)x>0 \quad\forall x\neq0,
\end{split}
\end{equation}
which implies that   the matrix $A^TA$ is symmetric positive definite.
 Let $\{\mu_i\}_{i=1}^n$ be an orthonormal set of eigenvectors of $A^TA$, i.e., $A^TA\mu_i=\lambda_i\mu_i$
with $0<\lambda_1\leq\lambda_2\leq\cdots\leq\lambda_n$.
Thus, we obtain
\begin{equation*}
\begin{split}
x^TA^TAx=\sum_{i=1}^nc_i^2\lambda_i,~~ x^Tx=\sum_{i=1}^nc_i^2  \quad\forall x=\sum_{i=1}^nc_i\mu_i.
\end{split}
\end{equation*}
From \eqref{5.2} and the above equations, there exists
\begin{equation*}
\begin{split}
x^T\left(A^TA- c^2  I\right)x=\sum_{i=1}^nc_i^2\left(\lambda_i- c^2  \right)>0 \quad\forall x\neq0,
\end{split}
\end{equation*}
which leads to $\lambda_1> c^2.$
From \eqref{5.1}, one has
\begin{equation*}
\begin{split}
\rho\left(\left(A^TA\right)^{-1}\right)=\lambda_1^{-1}< 1/c^2~~{\rm and}~~\left|A^{-1}\right|<c^{-1}.
\end{split}
\end{equation*}
The proof is completed.
\end{proof}
\begin{lemma}\label{Lemma:5.3}
If the BDF3 discrete convolution kernels $b_{n-k}^{(n)}$ in \eqref{1.8} are positive definite, the DOC kernels $d_{n-k}^{(n)}$ in \eqref{1.9} are also positive definite. For any real sequence $\{\mu^k\}_{k=1}^n$, it holds that
\begin{equation*}
\begin{split}
\sum_{k=1}^n\mu^k\sum_{j=1}^kd_{k-j}^{(k)}\mu^j\geq0,~~n\geq1.
\end{split}
\end{equation*}
\end{lemma}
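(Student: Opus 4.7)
The plan is to rewrite the double sum as a quadratic form in the matrix $D$ and then exploit the identity $D = B^{-1}$ from \eqref{1.10}. Observing that $D$ is lower triangular with entries $d_{k-j}^{(k)}$ in position $(k,j)$, for any vector $\mu = (\mu^1,\dots,\mu^n)^T$ we have
\begin{equation*}
\sum_{k=1}^n \mu^k \sum_{j=1}^k d_{k-j}^{(k)} \mu^j = \langle \mu, D\mu\rangle = \mu^T D \mu.
\end{equation*}
So the claim reduces to showing that $D = B^{-1}$ is positive definite whenever $B$ is positive definite in the sense of Proposition \ref{pr:2.1}, i.e., $x^T B x > 0$ for all $x \neq 0$.

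Next I would perform the change of variables $x := D\mu = B^{-1}\mu$. Since positive definiteness of $B$ implies $B$ is invertible, $x \neq 0$ whenever $\mu \neq 0$, and $\mu = Bx$. Substituting,
\begin{equation*}
\mu^T D \mu \;=\; \mu^T x \;=\; (Bx)^T x \;=\; x^T B^T x.
\end{equation*}
Because $B^T$ has the same symmetric part as $B$, namely $H = (B + B^T)/2$, we have $x^T B^T x = x^T H x = x^T B x > 0$ by the hypothesis that $B$ is positive definite. Hence $\mu^T D \mu > 0$ for every $\mu \neq 0$, i.e., $D$ is positive definite (and in particular the required inequality $\mu^T D \mu \geq 0$ holds).

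There is essentially no obstacle here; the argument is a one-line substitution once one recognizes that the DOC kernel matrix is literally $B^{-1}$ by the orthogonality identity \eqref{1.10}. The only point needing care is the convention for positive definiteness of a non-symmetric matrix: one must invoke Proposition \ref{pr:2.1} to translate $x^T B x > 0$ into positive definiteness of the symmetric part, so that the equality $x^T B^T x = x^T B x$ can be used. Together with the hypothesis supplied by Remark \ref{re:2.1} or Lemma \ref{Le:3.5} (which provide situations where the $b_{n-k}^{(n)}$ kernels are positive definite), this yields the desired positivity of the DOC kernels in those regimes.
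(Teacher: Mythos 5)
Your argument is correct and is essentially identical to the paper's own proof: both reduce the sum to the quadratic form $\mu^T D\mu$ with $D=B^{-1}$ and then use the substitution $\mu = Bx$ to write $\mu^T D\mu = x^T B^T x = x^T B x > 0$, invoking the positive definiteness of $B$ (via its symmetric part). No further comment is needed.
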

\begin{proof}
Let $\mu=(\mu^1,\mu^2,\cdots,\mu^n)^T\in \mathbb{R}^{n}$. We can check
\begin{equation*}
\begin{split}
\sum^n_{k=1}\mu^k\sum^k_{j=1}d^{(k)}_{k-j}\mu^j=\mu^TD \mu,~~n \geq 1
\end{split}
\end{equation*}
with the matrix $D$  in \eqref{1.11}.

According to Lemma \ref{Le:3.5}, we know that the matrix $B$ is positive definite.
Let $\forall \mu\in  \mathbb{R}^{n}$, $\mu\neq 0$, it yields  $\nu=B\mu\neq0$. Then we have
$$\nu^TD\nu=\nu^TB^{-1}\nu=\mu^TB^TB^{-1}B\mu=\mu^TB^T\mu>0.$$
The proof is completed.
\end{proof}

A discrete Gr\"{o}nwall's inequality is needed in the following analysis.
\begin{lemma}\cite{LZ}\label{Lemma:5.4}
Let $\lambda\geq0$ and the sequences $\left\{\xi_k\right\}_{k=0}^N$ and $\left\{V_k\right\}_{k=1}^N$ be nonnegative. If
\begin{equation*}
\begin{split}
V_n\leq\lambda\sum_{j=1}^{n-1}\tau_jV_j+\sum_{j=0}^{n}\xi_j,~~1\leq n\leq N,
\end{split}
\end{equation*}
then it holds that
\begin{equation*}
\begin{split}
V_n\leq\exp\left(\lambda t_{n-1}\right)\sum_{j=0}^{n}\xi_j,~~1\leq n\leq N.
\end{split}
\end{equation*}
\end{lemma}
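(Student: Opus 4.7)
\medskip

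\textbf{Proof plan for Lemma \ref{Lemma:5.4}.} The plan is to proceed by strong induction on $n$, using the elementary bound $e^{x}\geq 1+x$ to telescope the weighted sum into an exponential. Write $S_n:=\sum_{j=0}^{n}\xi_j$, noting that $S_n$ is nondecreasing in $n$ because each $\xi_j\geq 0$, and recall $t_{n-1}=\sum_{j=1}^{n-1}\tau_j$ with the convention $t_0=0$.

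First I would verify the base case $n=1$: the hypothesis gives $V_1\leq S_1$ (the sum over $j=1,\dots,0$ is empty), and $\exp(\lambda t_0)=1$, so the conclusion holds. Next, assuming $V_j\leq\exp(\lambda t_{j-1})S_j$ for all $1\leq j\leq n-1$, I would substitute into the hypothesis to obtain
\begin{equation*}
V_n\leq \lambda\sum_{j=1}^{n-1}\tau_j\exp(\lambda t_{j-1})S_j+S_n\leq S_n\Bigl(1+\lambda\sum_{j=1}^{n-1}\tau_j\exp(\lambda t_{j-1})\Bigr),
\end{equation*}
where the last step uses monotonicity $S_j\leq S_n$ for $j\leq n$.

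The key reduction is then to show
\begin{equation*}
1+\lambda\sum_{j=1}^{n-1}\tau_j\exp(\lambda t_{j-1})\leq \exp(\lambda t_{n-1}).
\end{equation*}
Applying $\lambda\tau_j\leq e^{\lambda\tau_j}-1$ termwise and multiplying by $e^{\lambda t_{j-1}}$ yields $\lambda\tau_j e^{\lambda t_{j-1}}\leq e^{\lambda t_j}-e^{\lambda t_{j-1}}$, so the sum telescopes to $e^{\lambda t_{n-1}}-1$, which gives exactly the required bound. Combined with the previous display, this closes the induction.

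The only mild obstacle is bookkeeping when $\lambda=0$ (in which case everything collapses to $V_n\leq S_n$, trivially consistent with $\exp(0)=1$) and when some $\tau_j=0$ (the telescoping identity still holds with a zero contribution). No other subtlety arises; the nonnegativity assumptions on $\xi_k$ and $V_k$ are exactly what is needed to pass to $S_n$ and to drop nonpositive leftover terms. Since this is a standard discrete Grönwall inequality and its proof is essentially the one given in \cite{LZ}, the argument is complete once the telescoping step is in place.
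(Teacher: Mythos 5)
Your argument is correct: the base case, the use of $S_j\leq S_n$, the termwise bound $\lambda\tau_j e^{\lambda t_{j-1}}\leq e^{\lambda t_j}-e^{\lambda t_{j-1}}$, and the telescoping all check out, and the strong induction is well-founded since $V_n$ depends only on $V_1,\dots,V_{n-1}$. The paper itself gives no proof of this lemma (it is quoted from \cite{LZ}), but your induction-plus-telescoping argument is the standard proof of this discrete Gr\"onwall inequality and is a valid self-contained justification.
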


\subsection{Stability  analysis}\label{Se:staana}
First, we show the $L^2$ norm stability analysis of the variable steps BDF3 scheme \eqref{1.7} for the Allen-Cahn model \eqref{1.1}.
\begin{theorem}\label{Theorem:5.1}
Let  BDF3 kernels $b_{n-k}^{(n)}$  be defined in \eqref{1.8}  with $B>\gamma\Lambda^{-1}$ in \eqref{3.1}.
Then the discrete solution $u^n$ of  BDF3 scheme \eqref{1.7} is unconditionally stable in the $L^2$ norm
\begin{equation*}
\begin{split}
\left\|\epsilon^{n}\right\|\leq 2\exp\left(4\gamma^{-1}\widetilde{c}t_{n-1}\right)\left\|\epsilon^{0}\right\|,~~n\geq1.
\end{split}
\end{equation*}
\end{theorem}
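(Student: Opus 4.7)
The plan follows the orthogonal-convolution strategy of \cite{CYZ:21,LZ} adapted to BDF3. Since the DOC kernels of BDF3 are no longer sign-definite, the two new ingredients of this section---the spectral norm estimate of Lemma~\ref{Lemma:5.2} and the positive definiteness of the DOC kernels in Lemma~\ref{Lemma:5.3}---replace the componentwise positivity that drove the BDF2 analysis. Let $u^n,v^n$ be two discrete BDF3 solutions and set $\epsilon^n:=u^n-v^n$; subtracting the two copies of \eqref{1.7} gives the error equation $D_3\epsilon^k-\varepsilon^2\Delta\epsilon^k+(f(u^k)-f(v^k))=0$, and the $H^1$-boundedness furnished by Lemma~\ref{Lemma:4.1}, together with the cubic factorisation $f(u)-f(v)=(u-v)(u^2+uv+v^2-1)$, supplies a Lipschitz-type estimate $\|f(u^k)-f(v^k)\|\le\widetilde{c}\,\|\epsilon^k\|$ whose constant $\widetilde{c}$ depends only on $c_1$.

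I would then convolve the error equation with the DOC kernels: multiplying by $d_{n-k}^{(n)}$, summing on $k$, and invoking the orthogonal identity \eqref{1.10} yields
\[
\nabla_\tau\epsilon^n=\sum_{k=1}^n d_{n-k}^{(n)}\bigl[\varepsilon^2\Delta\epsilon^k-(f(u^k)-f(v^k))\bigr].
\]
Taking the $L^2(\Omega)$ inner product with $2\epsilon^n$, using the identity $(2\epsilon^n,\nabla_\tau\epsilon^n)=\|\epsilon^n\|^2-\|\epsilon^{n-1}\|^2+\|\nabla_\tau\epsilon^n\|^2$, summing on $n$, and integrating by parts in the diffusion contribution, one reaches an identity whose right-hand side contains the diffusion double sum $-2\varepsilon^2\sum_{n,k}d_{n-k}^{(n)}(\nabla\epsilon^k,\nabla\epsilon^n)$ plus a nonlinear double sum. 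The diffusion term is non-positive: Lemma~\ref{Lemma:5.3}, applied pointwise in $\Omega$ to each Cartesian component of $\nabla\epsilon^k$ and then integrated, shows $\sum_{n,k}d_{n-k}^{(n)}(\nabla\epsilon^k,\nabla\epsilon^n)\ge 0$, so this term may be dropped when passing to an upper bound.

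The remaining nonlinear double sum is the main obstacle. Writing $F^k:=f(u^k)-f(v^k)$ and $G^n:=\sum_k d_{n-k}^{(n)}F^k$, this is the action of $D=B^{-1}$ on the vector $(F^k)$. Since $B>\gamma\Lambda^{-1}$ by Lemma~\ref{Le:3.5}, Lemmas~\ref{Lemma:5.1}--\ref{Lemma:5.2} yield $|\Lambda^{-1/2}D\Lambda^{-1/2}|<\gamma^{-1}$; reading this matrix bound pointwise in $\Omega$ on the vector $\vec F(x)$ and integrating over $\Omega$ produces the weighted spectral estimate
\[
\sum_{n=1}^m\frac{\|G^n\|^2}{\tau_n}\le \gamma^{-2}\sum_{k=1}^m\tau_k\|F^k\|^2\le \gamma^{-2}\widetilde{c}^{\,2}\sum_{k=1}^m\tau_k\|\epsilon^k\|^2,
\]
whose asymmetric weights $1/\tau_n$ on the left and $\tau_k$ on the right are precisely what a discrete Gr\"onwall argument needs. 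Cauchy--Schwarz then controls the nonlinear double sum by $2\gamma^{-1}\widetilde{c}\sum_k\tau_k\|\epsilon^k\|^2$, yielding $\|\epsilon^m\|^2\le\|\epsilon^0\|^2+2\gamma^{-1}\widetilde{c}\sum_{k=1}^m\tau_k\|\epsilon^k\|^2$. Isolating $\|\epsilon^m\|^2$ on the left and applying the discrete Gr\"onwall inequality (Lemma~\ref{Lemma:5.4}) with $\lambda=4\gamma^{-1}\widetilde{c}$ delivers $\|\epsilon^m\|^2\le 2\exp(4\gamma^{-1}\widetilde{c}\,t_{m-1})\|\epsilon^0\|^2$, from which the stated bound follows by a square root together with $\sqrt 2\le 2$. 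The truly delicate point is not the algebra but the passage from the purely matrix-theoretic inequality $|\mathscr{A}^{-1}|<\gamma^{-1}$ to a function-space weighted estimate whose two sides are compatible with Cauchy--Schwarz; this is exactly the step where the spectral norm inequality of Lemma~\ref{Lemma:5.2} takes over the role that componentwise positivity of DOC kernels played in the BDF2 analysis.
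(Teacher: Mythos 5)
Your proposal follows the paper's own argument essentially step for step: convolve the perturbed equation with the DOC kernels and use the orthogonality identity \eqref{1.10}, discard the diffusion double sum via the positive definiteness of the DOC kernels (Lemma~\ref{Lemma:5.3}), convert $B>\gamma\Lambda^{-1}$ through Lemmas~\ref{Lemma:5.1}--\ref{Lemma:5.2} into the weighted bound $\left|\mathscr{A}^{-1}\Lambda^{1/2}F\right|\le\gamma^{-1}\left|\Lambda^{1/2}F\right|$ applied pointwise in $\Omega$, then Cauchy--Schwarz with the asymmetric $\tau$-weights and the discrete Gr\"onwall lemma under the same step-size smallness condition. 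The only deviations --- closing Gr\"onwall on $\|\epsilon^n\|^2$ directly rather than via the paper's max-index reduction to a linear inequality, and writing the spectral estimate in squared form $\sum_n\|G^n\|^2/\tau_n\le\gamma^{-2}\sum_k\tau_k\|F^k\|^2$ --- are cosmetic, so the proof is correct and essentially identical to the paper's.
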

\begin{proof}
Let $\epsilon^{n}$ be the solution perturbation $\epsilon^{n}=\widetilde{u}^n-u^n$ for $0\leq n\leq N$. The perturbed equation is obtained
\begin{equation}\label{5.3}
\begin{split}
D_3\epsilon^j-\varepsilon^2\Delta \epsilon^j=f(u^j)-f(\widetilde{u}^j)=\widetilde{f}_u^j\epsilon^j, ~~j \geq 1
\end{split}
\end{equation}
with
\begin{equation*}
\begin{split}
\widetilde{f}_u^j=1-\left(u^j\right)^2-u^j\widetilde{u}^j-\left(\widetilde{u}^j\right)^2.
\end{split}
\end{equation*}
Note that the solution estimates in Lemma \ref{Lemma:4.1} and $H^1\subseteq L^\infty$, we have
\begin{equation}\label{5.4}
\begin{split}
\left\|\widetilde{f}_u^j\right\|_{L^\infty}
&\leq\left|\Omega\right|+\left\|u^j\right\|_{L^\infty}^2
+\left\|u^j\right\|_{L^\infty}\left\|\widetilde{u}^j\right\|_{L^\infty}
+\left\|\widetilde{u}^j\right\|_{L^\infty}^2\\
&\leq\left|\Omega\right|+c_\Omega^2\left\|u^j\right\|_{H^1}^2+c_\Omega^2
\left\|u^j\right\|_{H^1}\left\|\widetilde{u}^j\right\|_{H^1}+c_\Omega^2\left\|\widetilde{u}^j\right\|_{H^1}^2\\
&\leq\left|\Omega\right|+c_\Omega^2c_1^2+c_\Omega^2c_1\widetilde{c_1}+c_\Omega^2\widetilde{c_1}^2:=\widetilde{c},~~j\geq1,
\end{split}
\end{equation}
where $\left\|\widetilde{u}^j\right\|_{H^1}\leq \widetilde{c_1}$ is similar to Lemma \ref{Lemma:4.1}.
Multiplying both sides of \eqref{5.3} by the DOC kernels $d_{k-j}^{(k)}$, and summing $j$ from $1$ to $k$, we derive
\begin{equation*}
\begin{split}
\sum_{j=1}^kd_{k-j}^{(k)}D_3\epsilon^j-\varepsilon^2\sum_{j=1}^kd_{k-j}^{(k)}\Delta \epsilon^j=\sum_{j=1}^kd_{k-j}^{(k)}\widetilde{f}_u^j\epsilon^j,~~k \geq 1.
\end{split}
\end{equation*}
According to \eqref{1.8} and  \eqref{1.10}, it yields
 \begin{equation}\label{5.5}
 \begin{split}
\sum_{j=1}^kd_{k-j}^{(k)}D_3\epsilon^j
=\sum_{j=1}^kd_{k-j}^{(k)}\sum_{l=1}^{j}b_{j-l}^{(j)}\nabla_\tau \epsilon^l
=\sum_{l=1}^{k}\nabla_\tau \epsilon^l\sum_{j=l}^kd_{k-j}^{(k)}b_{j-l}^{(j)}=\nabla_\tau \epsilon^k,~~k \geq 1.
\end{split}
\end{equation}
Hence, we have
\begin{equation*}
\begin{split}
\nabla_\tau \epsilon^k-\varepsilon^2\sum_{j=1}^kd_{k-j}^{(k)}\Delta \epsilon^j
=\sum_{j=1}^kd_{k-j}^{(k)}\widetilde{f}_u^j\epsilon^j,~~k \geq 1.
\end{split}
\end{equation*}
Making the inner product of the above equality with $\epsilon^k$ and summing the dervied equality from $k=1$ to $n$, one obtains
\begin{equation*}
\begin{split}
\sum_{k=1}^n\left(\nabla_\tau \epsilon^k,\epsilon^k\right)+\varepsilon^2\sum_{k=1}^{n}\sum_{j=1}^kd_{k-j}^{(k)}\left(\nabla \epsilon^j,\nabla \epsilon^k\right)=\sum_{k=1}^{n}\sum_{j=1}^kd_{k-j}^{(k)}\left(\widetilde{f}_u^j\epsilon^j,\epsilon^k\right),~~n \geq 1.
\end{split}
\end{equation*}
For the first term on the left hand, we have
\begin{equation*}
\begin{split}
\sum_{k=1}^n\left(\nabla_\tau \epsilon^k,\epsilon^k\right)
\geq\frac{1}{2}\sum_{k=1}^n\left(\left\|\epsilon^k\right\|^2-\left\|\epsilon^{k-1}\right\|^2\right)
=\frac{1}{2}\left(\left\|\epsilon^n\right\|^2-\left\|\epsilon^0\right\|^2\right),
\end{split}
\end{equation*}
where the inequality $2a(a-b)\geq a^2-b^2$ has been used.

For the second term on the left hand, using  Lemma \ref{Lemma:5.3}, we obtain
\begin{equation*}
\begin{split}
\varepsilon^2\sum_{k=1}^{n}\sum_{j=1}^kd_{k-j}^{(k)}\left(\nabla \epsilon^j,\nabla \epsilon^k\right)\geq0.
\end{split}
\end{equation*}
From the above estimates, \eqref{1.11}, Lemma \ref{Lemma:5.1}, discrete Cauchy-Schwarz inequality and \eqref{3.1}, it yields
\begin{equation*}
\begin{split}
\left\|\epsilon^n\right\|^2-\left\|\epsilon^0\right\|^2
&\leq2\sum_{k=1}^{n}\sum_{j=1}^kd_{k-j}^{(k)}\left(\widetilde{f}_u^j\epsilon^j,\epsilon^k\right)
=2\int_\Omega \mathcal{E}^TDF_\epsilon dx
=2\int_\Omega \left\langle DF_\epsilon,\mathcal{E}\right\rangle dx\\
&=2\int_\Omega \left\langle B^{-1}F_\epsilon,\mathcal{E}\right\rangle dx
=2\int_\Omega \left\langle \mathscr{A}^{-1}\Lambda^{1/2} F_\epsilon,\Lambda^{1/2}\mathcal{E}\right\rangle dx\\
&\leq2\int_\Omega \left|\mathscr{A}^{-1}\right| \left|\Lambda^{1/2}F_\epsilon\right|\left|\Lambda^{1/2}\mathcal{E}\right|dx
\end{split}
\end{equation*}
with
\begin{equation*}
\begin{split}
\mathcal{E}=\left(\epsilon^1,\epsilon^2,\cdots,\epsilon^n\right)^{T}, ~~F_\epsilon=\left(\widetilde{f}_u^1\epsilon^1,\widetilde{f}_u^2\epsilon^2,\cdots,\widetilde{f}_u^n\epsilon^n\right)^{T}.
\end{split}
\end{equation*}
According to Lemmas \ref{Lemma:5.1}, \ref{Lemma:5.2}, Cauchy-Schwarz inequality and \eqref{5.4}, we get
\begin{equation*}
\begin{split}
\left\|\epsilon^n\right\|^2-\left\|\epsilon^0\right\|^2
&\leq 2\gamma^{-1}\int_{\Omega}\left|\Lambda^{1/2}F_\epsilon\right|\left|\Lambda^{1/2}\mathcal{E}\right|dx
\leq 2\gamma^{-1}\sqrt{\int_{\Omega}\left|\Lambda^{1/2}F_\epsilon\right|^2dx}
\sqrt{\int_{\Omega}\left|\Lambda^{1/2}\mathcal{E}\right|^2dx}\\
&=2\gamma^{-1}\sqrt{\int_{\Omega}\sum_{k=1}^{n}\tau_k\left(\widetilde{f}_u^k\epsilon^k\right)^2dx}
\sqrt{\int_{\Omega}\sum_{k=1}^{n}\tau_k\left(\epsilon^k\right)^2dx}\\
&\leq 2\gamma^{-1}\widetilde{c}\int_{\Omega}\sum_{k=1}^{n}\tau_k\left(\epsilon^k\right)^2dx
= 2\gamma^{-1}\widetilde{c}\sum_{k=1}^{n}\tau_k\left\|\epsilon^k\right\|^2,~~n \geq 1.
\end{split}
\end{equation*}
Choosing some integer $n_1\left(0\leq n_1\leq n\right)$ such that $\left\|\epsilon^{n_1}\right\|=\max_{0\leq k\leq n}\left\|\epsilon^k\right\|$. Taking $n:=n_1$ in the above inequality, we get
\begin{equation*}
\begin{split}
\left\|\epsilon^{n}\right\|
\leq\left\|\epsilon^0\right\|+2\gamma^{-1}\widetilde{c}\sum_{k=1}^{n}\tau_k\left\|\epsilon^k\right\|,~~n \geq 1.
\end{split}
\end{equation*}
Using the discrete Gr\"{o}nwall's inequality in Lemma \ref{Lemma:5.4} and for sufficiently small step-sizes $\tau_n$
(namely., $2\gamma^{-1}\widetilde{c}\tau_n<\frac{1}{2}$), we get
\begin{equation*}
\begin{split}
\left\|\epsilon^{n}\right\|\leq 2\exp\left(4\gamma^{-1}\widetilde{c}t_{n-1}\right)\left\|\epsilon^{0}\right\|,~~n\geq1.
\end{split}
\end{equation*}
The proof is completed.
\end{proof}

\subsection{Convergence  analysis}\label{Se:conv}
We are now at the stage to show the $L^2$ norm convergence analysis. We first consider the consistency error of  the variable steps BDF3 scheme \eqref{1.7}.
\begin{lemma}\label{Lemma:5.5}
For the consistency error $\eta^j:=D_3u(t_j)-\partial_tu(t_j)$ for $j\geq 1$, it holds that
\begin{equation*}
\begin{split}
\left\|\eta^1\right\|&\leq \int_{0}^{t_1}\left\|\partial_{tt}u\right\|dt,\quad
\left\|\eta^2\right\|\leq 2\tau_2\int_{t_1}^{t_2}\left\|\partial_{ttt}u\right\|
+\frac{1}{2}\tau_1\int_{0}^{t_1}\left\|\partial_{ttt}u\right\|,\\
\left\|\eta^j\right\|&\leq C\left(\tau_{j}^2\int^{t_j}_{t_{j-1}}\left\|\partial_{tttt}u\right\|dt
+\tau_{j-1}^2\int^{t_{j-1}}_{t_{j-2}}\left\|\partial_{tttt}u\right\|dt
+\tau_{j-2}^2\int^{t_{j-2}}_{t_{j-3}}\left\|\partial_{tttt}u\right\|dt\right),~~ j\geq 3.
\end{split}
\end{equation*}
\end{lemma}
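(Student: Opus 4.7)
The plan is to handle the three cases $j=1$, $j=2$, and $j\geq 3$ separately, exploiting in each case the polynomial exactness of the corresponding starting operator together with a Taylor/Peano remainder representation of the truncation error.

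For $j=1$ (BDF1), I rewrite the error as
\begin{equation*}
\eta^1 = \frac{u(t_1)-u(t_0)}{\tau_1} - \partial_t u(t_1) = \frac{1}{\tau_1}\int_{t_0}^{t_1}\bigl[\partial_t u(t) - \partial_t u(t_1)\bigr]\,dt = -\frac{1}{\tau_1}\int_{t_0}^{t_1}\int_{t}^{t_1}\partial_{tt}u(s)\,ds\,dt,
\end{equation*}
exchange the order of integration via Fubini so the weight becomes $(s-t_0)/\tau_1\leq 1$, and apply Minkowski's inequality in Bochner form to get $\|\eta^1\|\leq \int_0^{t_1}\|\partial_{tt}u\|\,dt$.

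For $j=2$, the operator $D_3 u(t_2)=D_2 u(t_2)$ from \eqref{1.6} is exact on quadratics. Subtracting the second-order Taylor expansion of $u$ around $t_2$ annihilates the leading terms, leaving a residual representable via the integral Taylor remainder with $\partial_{ttt}u$. Inserting the BDF2 coefficients $\frac{1+2r_2}{\tau_2(1+r_2)}$ and $-\frac{r_2^2}{\tau_2(1+r_2)}$, splitting the support of the residual into $[t_1,t_2]$ and $[t_0,t_1]$, and applying Bochner--Minkowski yields the two announced pieces with constants $2\tau_2$ and $\tfrac{1}{2}\tau_1$.

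For $j\geq 3$, the operator $D_3 u(t_j) = (\Pi_{j,3}u)'(t_j)$ is exact on cubics. I will use the Newton form of the interpolation error,
\begin{equation*}
u(t) - (\Pi_{j,3}u)(t) = \omega_j(t)\, u\bigl[t_{j-3},t_{j-2},t_{j-1},t_j,t\bigr], \qquad \omega_j(t) = \prod_{\ell=0}^{3}(t - t_{j-\ell}),
\end{equation*}
or equivalently, by the Peano kernel theorem applied to the functional $v \mapsto (\Pi_{j,3}v)'(t_j) - \partial_t v(t_j)$, write
\begin{equation*}
\eta^j = \int_{t_{j-3}}^{t_j} K_j(s)\,\partial_{tttt}u(s)\,ds,
\end{equation*}
with $K_j$ a piecewise cubic kernel supported on $[t_{j-3}, t_j]$. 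A direct inspection of $K_j$ on each subinterval $[t_{j-\ell-1}, t_{j-\ell}]$ for $\ell=0,1,2$, combined with the step-ratio bound $r_k\leq 1.405$ from Section \ref{Se:variable ratio} (which controls $\tau_{j-\ell-i}$ by $\tau_{j-\ell}$ up to a power of $r_s$), gives $|K_j(s)| \leq C\,\tau_{j-\ell}^2$ on that subinterval with $C$ independent of $j$. Applying Minkowski's inequality in Bochner form on each of the three subintervals then yields the three-term estimate.

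The main obstacle will be the case $j\geq 3$: the Peano kernel $K_j$ for the variable-step BDF3 truncation must be expressed cleanly enough on each of the three subintervals that the square of the \emph{local} step size $\tau_{j-\ell}^2$ can be extracted, rather than a mixed product of the three step sizes. Once that localization is carried out, the other ingredients (Hermite--Genocchi or Taylor remainder, Minkowski in Bochner form, ratio bound) are standard. The cases $j=1, 2$ are routine Taylor-remainder computations.
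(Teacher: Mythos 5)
Your proposal is correct and follows essentially the same route as the paper: exactness of the (variable-step) BDF operator on polynomials plus an integral Taylor/Peano remainder supported on $[t_{j-3},t_j]$, with the kernel bounded subinterval by subinterval (using the step-ratio bound to extract the local factor $\tau_{j-\ell}^2$) and Minkowski's inequality to pass to $\|\cdot\|$. The only cosmetic difference is that the paper writes the $j\geq 3$ kernel explicitly as three grouped Taylor remainders weighted by $b_1^{(j)}-b_0^{(j)}$, $b_2^{(j)}-b_1^{(j)}$, $-b_2^{(j)}$ and handles $j=1,2$ by citing Lemma 4.1 of \cite{CYZ:21}, whereas you compute those cases directly.
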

\begin{proof}
For simplicity, denote
\begin{equation*}
\begin{split}
G_{3}^j=\int_{t_{j-1}}^{t_j}\left\|\partial_{ttt}u\right\|dt \quad{\rm  and}\quad
G_{4}^j=\int_{t_{j-1}}^{t_j}\left\|\partial_{tttt}u\right\|dt,~~ j\geq 1.
\end{split}
\end{equation*}
For the cases of $j=1$ and $j=2$, according to Lemma 4.1 in \cite{CYZ:21}, we have
\begin{equation*}
\begin{split}
\left\|\eta^1\right\|&\leq\int_{0}^{t_1}\left\|\partial_{tt}u\right\|dt,\\
\left\|\eta^2\right\|&\leq \frac{1+2 r_2}{1+r_2}\tau_2G_{3}^2+\frac{r_2}{2\left(1+r_2\right)}\tau_1G_{3}^{1}
\leq2\tau_2\int_{t_1}^{t_2}\left\|\partial_{ttt}u\right\|
+\frac{1}{2}\tau_1\int_{0}^{t_1}\left\|\partial_{ttt}u\right\|.
\end{split}
\end{equation*}
For the case of $j\geq3$, by using the Taylor's expansion formula, it yields
\begin{equation*}
\begin{split}
\eta^j&=\frac{b_1^{(j)}-b_0^{(j)}}{6}\int^{t_j}_{t_{j-1}}\left(t-t_{j-1}\right)^3\partial_{tttt}udt
+\frac{b_2^{(j)}-b_1^{(j)}}{6}\int^{t_j}_{t_{j-2}}\left(t-t_{j-2}\right)^3\partial_{tttt}udt\\
&\quad-\frac{b_2^{(j)}}{6}\int^{t_j}_{t_{j-3}}\left(t-t_{j-3}\right)^3\partial_{tttt}udt.
\end{split}
\end{equation*}
According to \eqref{1.5}, the consistency error is bounded by
\begin{equation*}
\begin{split}
\left\|\eta^j\right\|&\leq C\left(b_0^{(j)}\tau_j^3G_{4}^j-b_1^{(j)}\tau_{j-1}^3G_{4}^{j-1}
+b_2^{(j)}\tau_{j-2}^3G_{4}^{j-2}\right)\\
&\leq C\left(\tau_{j}^2\int^{t_j}_{t_{j-1}}\left\|\partial_{tttt}u\right\|dt
+\tau_{j-1}^2\int^{t_{j-1}}_{t_{j-2}}\left\|\partial_{tttt}u\right\|dt
+\tau_{j-2}^2\int^{t_{j-2}}_{t_{j-3}}\left\|\partial_{tttt}u\right\|dt\right).
\end{split}
\end{equation*}
The proof is completed.
\end{proof}
\begin{theorem}\label{Theorem:5.2}
Let $u(t_n)$ and $u^n$ be the solution of  \eqref{1.1} and the BDF3 scheme \eqref{1.7}, respectively. Then   the following error estimate holds for $1\leq n\leq N$
\begin{equation*}
\begin{split}
\|u(t_n)-u^n\|
&\leq C\left(\tau_1^{1/2}\int_{0}^{t_1}\left\|\partial_{tt}u\right\|dt+ \tau_2^{3/2}\int_{t_1}^{t_2}\left\|\partial_{ttt}u\right\|
+\tau_1\tau_2^{1/2}\int_{0}^{t_1}\left\|\partial_{ttt}u\right\|\right.\\
&\left.\!+\!\sqrt{\sum_{k=3}^{n}\!\tau_k\left(\!\tau_{k}^2\!\int^{t_k}_{t_{k-1}}\!\left\|\partial_{tttt}u\right\|\!dt
\!+\!\tau_{k-1}^2\!\int^{t_{k-1}}_{t_{k-2}}\!\left\|\partial_{tttt}u\right\|\!dt
\!+\!\tau_{k-2}^2\!\int^{t_{k-2}}_{t_{k-3}}\!\left\|\partial_{tttt}u\right\|\!dt\!\right)^2}\right).
\end{split}
\end{equation*}
\end{theorem}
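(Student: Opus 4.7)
The plan is to mirror the stability analysis of Theorem~\ref{Theorem:5.1} almost verbatim, but working with the true error $e^n := u(t_n)-u^n$ in place of the solution perturbation and carrying an additional consistency-error forcing that is controlled by Lemma~\ref{Lemma:5.5}. Subtracting the BDF3 scheme \eqref{1.7} from the equation \eqref{1.1} evaluated at $t_j$, and writing $f(u(t_j))-f(u^j) = -\widetilde{f}_u^j e^j$ with the same mean-value-type coefficient satisfying the uniform bound $\|\widetilde{f}_u^j\|_{L^\infty} \leq \widetilde{c}$ as in \eqref{5.4} (which still holds since Lemma~\ref{Lemma:4.1} bounds $u^j$ in $H^1$ and the exact solution $u(t_j)$ admits a similar a priori bound), I obtain the error equation
\begin{equation*}
D_3 e^j - \varepsilon^2\Delta e^j = -\widetilde{f}_u^j e^j + \eta^j, \qquad j\geq 1, \qquad e^0 = 0.
\end{equation*}

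Next, I multiply by the DOC kernels $d_{k-j}^{(k)}$ and sum over $j=1,\ldots,k$. Just as in \eqref{5.5}, the discrete orthogonality identity \eqref{1.10} collapses the BDF3 contribution to $\nabla_\tau e^k$, yielding
\begin{equation*}
\nabla_\tau e^k - \varepsilon^2\sum_{j=1}^k d_{k-j}^{(k)}\Delta e^j = -\sum_{j=1}^k d_{k-j}^{(k)}\widetilde{f}_u^j e^j + \sum_{j=1}^k d_{k-j}^{(k)}\eta^j.
\end{equation*}
Taking the $L^2(\Omega)$ inner product with $e^k$ and summing from $k=1$ to $n$, the telescoping left-hand term gives $\tfrac12(\|e^n\|^2-\|e^0\|^2)$, the Laplacian contribution is non-negative by Lemma~\ref{Lemma:5.3}, and the right-hand side splits into a nonlinear part (treated exactly as in Theorem~\ref{Theorem:5.1}) plus a new consistency part of the form $\int_\Omega \langle DH,\mathcal{E}\rangle\,dx$ with $H=(\eta^1,\ldots,\eta^n)^T$. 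Rewriting $D=B^{-1}$ and conjugating by $\Lambda^{1/2}$ as in the stability proof, Lemmas~\ref{Lemma:5.1}--\ref{Lemma:5.2} yield $|\mathscr{A}^{-1}|<\gamma^{-1}$; then Cauchy--Schwarz on $\Omega$ and Young's inequality applied to the consistency piece give
\begin{equation*}
\|e^n\|^2 \leq C\gamma^{-2}\sum_{k=1}^n \tau_k\|\eta^k\|^2 + C\gamma^{-1}(\widetilde{c}+1)\sum_{k=1}^n \tau_k\|e^k\|^2.
\end{equation*}

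Absorbing the $k=n$ term on the right under a mild step-size restriction and invoking the discrete Gronwall inequality in Lemma~\ref{Lemma:5.4}, I obtain $\|e^n\|^2 \leq C\exp(C t_{n-1})\sum_{k=1}^n \tau_k\|\eta^k\|^2$. Substituting the three bounds of Lemma~\ref{Lemma:5.5} into this sum and taking square roots produces the stated estimate: the $k=1$ contribution $\sqrt{\tau_1}\|\eta^1\|$ becomes the $\tau_1^{1/2}$-term, the $k=2$ contribution $\sqrt{\tau_2}\|\eta^2\|$ generates the $\tau_2^{3/2}$ and $\tau_1\tau_2^{1/2}$ terms, and the $k\geq 3$ contributions assemble directly into the final square-root sum.

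The main obstacle is the consistency forcing: one must pass it through the $B^{-1}$-inversion in such a way that the resulting bound is of weighted $\ell^2_\tau$ type, $\sqrt{\sum \tau_k\|\eta^k\|^2}$, rather than a crude $\sum \tau_k\|\eta^k\|$ bound that would cost half an order in the uniform-step limit. Achieving this requires keeping the Cauchy--Schwarz step on the product $\Omega\times\{1,\ldots,n\}$ with the $\Lambda^{1/2}$-weight intact and invoking the spectral norm inequality of Lemma~\ref{Lemma:5.2} on the full vector $\Lambda^{1/2}H$ rather than term-by-term; the positive definiteness $B>\gamma\Lambda^{-1}$ from Section~\ref{Se:variable ratio} under $r_k\leq 1.405$ is exactly what makes this spectral-norm route available.
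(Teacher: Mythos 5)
Your proposal follows essentially the same route as the paper's proof of Theorem~\ref{Theorem:5.2}: the same error equation with consistency forcing $\eta^j$, the same DOC-kernel multiplication collapsing $D_3$ via \eqref{1.10}, the same treatment of the Laplacian term by Lemma~\ref{Lemma:5.3}, the same $D=B^{-1}$, $\Lambda^{1/2}$-conjugation and spectral-norm estimate via Lemmas~\ref{Lemma:5.1}--\ref{Lemma:5.2}, and the same Gr\"onwall conclusion feeding in Lemma~\ref{Lemma:5.5}. The only cosmetic difference is that you close the recursion with Young's inequality on the squared quantities, whereas the paper keeps the product form, selects the maximizing index $n_2$, and applies Lemma~\ref{Lemma:5.4} to $\|e^k\|$ directly; both variants work.
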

\begin{proof}
Let $e^n:=u(t_n)-u^n$ be the error function  with $e^0=0$. From \eqref{1.1} and \eqref{1.7},
we have the following error equation
\begin{equation}\label{5.6}
\begin{split}
D_3e^j-\varepsilon^2\Delta e^j=f(u^j)-f(u(t_j))+\eta^j=f_u^je^j+\eta^j, ~~j \geq 1
\end{split}
\end{equation}
with
\begin{equation*}
\begin{split}
f_u^j=1-\left(u^j\right)^2-u^ju(t_j)-\left(u(t_j)\right)^2,~~~\eta^j=D_3u(t_j)-u'(t_j).
\end{split}
\end{equation*}
The energy dissipation law \eqref{1.3} of the Allen-Cahn equation \eqref{1.1} shows that $E(u(t_n))\leq E(u(t_0))$. From the formulation \eqref{1.2}, it is easy to check that $\left\|u(t_n)\right\|_{H^1}$ can be bounded by a time-independent constant $c_2$.
Note that the solution estimates in Lemma \ref{Lemma:4.1} and $H^1\subseteq L^\infty$, we have
\begin{equation}\label{5.7}
\begin{split}
\left\|f_u^j\right\|_{L^\infty}&\leq\left|\Omega\right|+\left\|u^j\right\|_{L^\infty}^2+
\left\|u^j\right\|_{L^\infty}\left\|u(t_j)\right\|_{L^\infty}+\left\|u(t_j)\right\|_{L^\infty}^2\\
&\leq\left|\Omega\right|+c_\Omega^2\left\|u^j\right\|_{H^1}^2+c_\Omega^2
\left\|u^j\right\|_{H^1}\left\|u(t_j)\right\|_{H^1}+c_\Omega^2\left\|u(t_j)\right\|_{H^1}^2\\
&\leq\left|\Omega\right|+c_\Omega^2c_1^2+c_\Omega^2c_1c_2+c_\Omega^2c_2^2:=c_3,~~j\geq1.
\end{split}
\end{equation}
Multiplying both sides of \eqref{5.6} by the DOC kernels $d_{k-j}^{(k)}$, and summing $j$ from $1$ to $k$, we derive by applying the equality \eqref{5.5}
\begin{equation*}
\begin{split}
\nabla_\tau e^k-\varepsilon^2\sum_{j=1}^kd_{k-j}^{(k)}\Delta e^j=\sum_{j=1}^kd_{k-j}^{(k)}f_u^je^j+\sum_{j=1}^kd_{k-j}^{(k)}\eta^j,~~k \geq 1.
\end{split}
\end{equation*}
Making the inner product of the above equality with $e^k$ and summing the resulting equality from $k=1$ to $n$, there exists
\begin{equation*}
\begin{split}
\sum_{k=1}^n\left(\nabla_\tau e^k,e^k\right)\!+\!\varepsilon^2\sum_{k=1}^{n}\sum_{j=1}^kd_{k-j}^{(k)}\left(\nabla e^j,\nabla e^k\right)\!=\!\sum_{k=1}^{n}\sum_{j=1}^kd_{k-j}^{(k)}\left(f_u^je^j,e^k\right)
\!+\!\sum_{k=1}^{n}\sum_{j=1}^kd_{k-j}^{(k)}\left(\eta^j,e^k\right).
\end{split}
\end{equation*}
For the first term on the left hand, we have
\begin{equation*}
\begin{split}
\sum_{k=1}^n\left(\nabla_\tau e^k,e^k\right)
\geq\frac{1}{2}\sum_{k=1}^n\left(\left\|e^k\right\|^2-\left\|e^{k-1}\right\|^2\right)
=\frac{1}{2}\left(\left\|e^n\right\|^2-\left\|e^0\right\|^2\right),
\end{split}
\end{equation*}
where the inequality $2a(a-b)\geq a^2-b^2$ has been used.

For the second term on the left hand, using  Lemma \ref{Lemma:5.3}, we obtain
\begin{equation*}
\begin{split}
\sum_{k=1}^{n}\sum_{j=1}^kd_{k-j}^{(k)}\left(\nabla e^j,\nabla e^k\right)\geq0.
\end{split}
\end{equation*}
From the above estimates, \eqref{1.11}, Lemma \ref{Lemma:5.1}, discrete Cauchy-Schwarz inequality and \eqref{3.1}, it yields
\begin{equation*}
\begin{split}
\left\|e^n\right\|^2-\left\|e^0\right\|^2
&\leq2\sum_{k=1}^{n}\sum_{j=1}^kd_{k-j}^{(k)}\left(f_u^je^j,e^k\right)
+2\sum_{k=1}^{n}\sum_{j=1}^kd_{k-j}^{(k)}\left(\eta^j,e^k\right)\\
&=2\int_\Omega E^TDF_edx+2\int_\Omega E^TD\Upsilon dx
=2\int_\Omega \left\langle DF_e,E\right\rangle dx
+2\int_\Omega \left\langle D\Upsilon,E\right\rangle dx\\
&=2\int_\Omega \left\langle B^{-1}F_e,E\right\rangle dx
+2\int_\Omega \left\langle B^{-1}\Upsilon,E\right\rangle dx\\
&=2\int_\Omega \left\langle\mathscr{A}^{-1}\Lambda^{1/2}F_e,\Lambda^{1/2}E\right\rangle dx
+2\int_\Omega \left\langle \mathscr{A}^{-1}\Lambda^{1/2} \Upsilon,\Lambda^{1/2}E\right\rangle dx\\
&\leq2\int_\Omega \left|\mathscr{A}^{-1}\right| \left|\Lambda^{1/2}F_e\right|\left|\Lambda^{1/2}E\right|dx
+2\int_\Omega \left|\mathscr{A}^{-1}\right| \left|\Lambda^{1/2}\Upsilon\right|\left|\Lambda^{1/2}E\right|dx
\end{split}
\end{equation*}
with
\begin{equation*}
\begin{split}
E=\left(e^1,e^2,\cdots,e^n\right)^{T}, ~~F_e=\left(f_u^1e^1,f_u^2e^2,\cdots,f_u^ne^n\right)^{T},
~~\Upsilon=\left(\eta^1,\eta^2,\cdots,\eta^n\right)^{T}.
\end{split}
\end{equation*}
According to Lemmas \ref{Lemma:5.1}, \ref{Lemma:5.2}, Cauchy-Schwarz inequality and \eqref{5.7}, we get
\begin{equation*}
\begin{split}
\left\|e^n\right\|^2-\left\|e^0\right\|^2
&\leq 2\gamma^{-1}\int_{\Omega}\left|\Lambda^{1/2}F_e\right|\left|\Lambda^{1/2}E\right|dx
+2\gamma^{-1}\int_{\Omega}\left|\Lambda^{1/2}\Upsilon\right|\left|\Lambda^{1/2}E\right|dx\\
&\leq 2\gamma^{-1}\left(\sqrt{\int_{\Omega}\left|\Lambda^{1/2}F_e\right|^2dx}
\!+\!\sqrt{\int_{\Omega}\left|\Lambda^{1/2}\Upsilon\right|^2dx}\right)
\sqrt{\int_{\Omega}\left|\Lambda^{1/2}E\right|^2dx}\\
&=2\gamma^{-1}\left(\sqrt{\int_{\Omega}\sum_{k=1}^{n}\tau_k\left(f_u^ke^k\right)^2dx}
\!+\!\sqrt{\int_{\Omega}\sum_{k=1}^{n}\tau_k\left(\eta^k\right)^2dx}\right)
\sqrt{\int_{\Omega}\sum_{k=1}^{n}\tau_k\left(e^k\right)^2dx}\\
&\leq 2\gamma^{-1}c_3\sum_{k=1}^{n}\tau_k\left\|e^k\right\|^2
+2\gamma^{-1}\sqrt{\sum_{k=1}^{n}\tau_k\left\|\eta^k\right\|^2}
\sqrt{\sum_{k=1}^{n}\tau_k\left\|e^k\right\|^2},~~n \geq 1.
\end{split}
\end{equation*}
Taking some integer $n_2\left(0\leq n_2\leq n\right)$ such that $\left\|e^{n_2}\right\|=\max_{0\leq k\leq n}\left\|e^k\right\|$. Setting $n:=n_2$ in the above inequality, we get
\begin{equation*}
\begin{split}
\left\|e^{n}\right\|\leq\left\|e^0\right\|+2\gamma^{-1}c_3\sum_{k=1}^{n}\tau_k\left\|e^k\right\|
+2\gamma^{-1}\sqrt{\sum_{k=1}^{n}\tau_k\left\|\eta^k\right\|^2},~~n \geq 1.
\end{split}
\end{equation*}
Using the discrete Gr\"{o}nwall's inequality in Lemma \ref{Lemma:5.4} and for sufficiently small step-sizes $\tau_n$
(namely., $2\gamma^{-1}c_3\tau_n<\frac{1}{2}$), we get
\begin{equation*}
\begin{split}
\left\|e^{n}\right\|\leq 4\gamma^{-1}\exp\left(4\gamma^{-1}c_3t_{n-1}\right)\sqrt{\sum_{k=1}^{n}\tau_k\left\|\eta^k\right\|^2},~~n \geq 1.
\end{split}
\end{equation*}
The desired result follows by Lemma \ref{Lemma:5.5}.
\end{proof}

\section{Numerical example}
In this section, we  provide some details on the numerical implementations and present several numerical examples to confirm our theoretical statements. For the variable steps
BDF3 scheme \eqref{1.7} for the Allen-Cahn equation, we perform a simple Newton-type iteration  at each time level with a tolerance $10^{-10}$. We always choose the solution at the previous level as the initial value of Newton iteration.
In space, we discretize by the spectral collocation method  with the Cheby\-shev--Gauss--Lobatto points.
We numerically verify the  theoretical results including convergence orders in the discrete $L^2$-norm.
In order to investigate the temporal convergence rate at the final time $T=1$, we fix $M_x=M_y=20$;
the spatial error is negligible  since the spectral collocation method converges exponentially;
see, e.g., \cite[Theorem 4.4, \textsection{4.5.2}]{STW:2011}.

\begin{example}\label{ex:5.1}
The initial value and the forcing term are chosen such that the exact solution of equation \eqref{1.1} is
\begin{equation*}
\label{periodic}
u(x,t)=(t^4+1)(1-x^2)(1-y^2),\quad -1\leqslant x,y \leqslant 1,\  0\leqslant t \leqslant 1.
\end{equation*}

Here, we consider two cases of the adjacent time-step ratios $r_k$:

Case I: $r_{2k}=2,$ for $1\leq k\leq \frac{N}{2}$, and $r_{2k-1}=1/2,$ for $2\leq k\leq \frac{N}{2}$.

Case II: the arbitrary meshes with random time-steps $\tau_k=T\sigma_k/S$ for $1\leq k\leq N$, where
$S=\sum_{k=1}^N\sigma_k$ and $\sigma_k\in(0,1)$ are random numbers subject to the uniform distribution  \cite{CYZ:21,LZ}.
\end{example}


\begin{table}[!ht]
 \begin{center}
  \caption {The discrete $L^2$-norm errors and numerical convergence orders.}
\begin{tabular*}{\linewidth}{@{\extracolsep{\fill}}*{10}{ccccccc}}       \hline\hline
&&&$Case$ I\\
$N$ &$\varepsilon^2=0.16$ &Rate &$\varepsilon^2=0.36$ &Rate &$\max r_k$  &$\min r_k$\\  \hline
~20  &2.8069e-04  &       &1.9512e-04  &        &2     &1/2\\
~40  &3.5943e-05  &2.9652 &2.4542e-05  &2.9910  &2     &1/2\\
~80  &4.5427e-06  &2.9841 &3.0751e-06  &2.9965  &2     &1/2\\
~160 &5.7085e-07  &2.9924 &3.8478e-07  &2.9985  &2     &1/2\\   \hline\hline
&&&$Case$ II\\
$N$ &$\varepsilon^2=0.16$ &Rate &$\varepsilon^2=0.36$ &Rate &$\max r_k$  &$\min r_k$\\  \hline
~20  &3.2456e-04  &       &2.3570e-04  &        &44.3916     &0.0405\\
~40  &4.7387e-05  &2.7759 &3.0397e-05  &2.9550  &48.8928     &0.0121\\
~80  &6.2198e-06  &2.9295 &4.2227e-06  &2.8477  &48.8928     &0.0121\\
~160 &7.8189e-07  &2.9918 &5.0184e-07  &3.0729  &76.0331     &0.0121\\   \hline\hline
    \end{tabular*}\label{table:1}
  \end{center}
\end{table}

As observed in Table \ref{table:1}, the variable steps BDF3 scheme \eqref{1.7} achieves  the third-order accuracy, which agrees with Theorem \ref{Theorem:5.2}.
The improved condition $r_k\leq1.405$ is a sufficient condition for third-order convergence.

\bibliographystyle{amsplain}

\end{document}